\documentclass[a4paper,10pt]{extarticle}

\usepackage{amsfonts}
\usepackage{amscd,color}
\usepackage{amsmath,amsfonts,amssymb,amscd}
\usepackage{indentfirst,graphicx,epsfig}
\usepackage{graphicx}
\usepackage{tikz}
\usetikzlibrary{calc, shapes.geometric, positioning}
\usetikzlibrary{calc, decorations.pathreplacing, decorations.pathmorphing, calligraphy, shapes.geometric}
\input{epsf}
\usepackage{epstopdf}
\usepackage{caption}
\usepackage{mdframed}
\newtheorem{theorem}{Theorem}

\newtheorem{example}{Example}

\newtheorem{lemma}[theorem]{Lemma}

\newtheorem{claim}{Claim}
\newtheorem{remark}{Remark}
\newtheorem{corollary}[theorem]{Corollary}

\tikzstyle{snode}=[circle,draw=black,fill=white,thick, inner sep=0pt ,minimum size=1.2mm]
\tikzstyle{bnode}=[circle ,draw=black,fill=black,thick, inner sep=0pt ,minimum size=1.2mm]

\newenvironment {proof} {\noindent{\em Proof.}}{\hspace*{\fill}$\Box$\par\vspace{4mm}}
\newcommand{\ml}{l\kern-0.55mm\char39\kern-0.3mm}

\newenvironment{theorem-non}[1]{\trivlist \item [\hskip \labelsep {\bf #1}]\ignorespaces\it}{\endtrivlist}

\title{\textbf{Degree sum conditions for a graph to have bounded conflict-free connection number}}

\author{Dinh Hanh Dang\footnote{E-mail address: hanhdd@hau.edu.vn}\\
	Department of Mathematics\\
	Hanoi Architectural University\\
	129 TranPhu Str., Hanoi, Vietnam\\
	\medskip\\
	Trung Duy Doan\footnote{E-mail address: trungdoanduy@gmail.com}\\
	Faculty of Mathematics and Informatics\\
	Hanoi University of Science and Technology, Hanoi, Vietnam\\
	\medskip\\
	Pham Hoang Ha\footnote{E-mail address: ha.ph@hnue.edu.vn (Corresponding author).}\\
	Department of Mathematics-Informatics\\
	School of Mathematics and Computer Science\\
	Hanoi National University of Education\\
	136 XuanThuy Str., Hanoi, Vietnam\\
	\medskip\\    
	Vu Quang Minh\footnote{E-mail address: quangminhvu.nckh@gmail.com }\\
	Department of Mathematics-Informatics\\
	School of Mathematics and Computer Science\\
	Hanoi National University of Education\\
	136 XuanThuy Str., Hanoi, Vietnam}%
\date{}

\begin{document}
	\maketitle
	
	\begin{abstract}
		A path in an edge-coloured graph is called \emph{conflict-free} if a colour is exclusively applied to one of its edges. A graph $G$ is considered \emph{conflict-free connected} if every pair of vertices in $V(G)$ is connected by a conflict-free path. The minimum number of colours required to render a connected graph $G$ conflict-free connected is referred to as the \emph{conflict-free connection number}.  In this paper, we introduce several sharp conditions on the minimum degree sum of any $4$ independent vertices in $G$ to ensure that the conflict-free connection number of $G$ is bounded.
		
		\textbf{Keywords:} conflict-free connection number, cut-edge, degree sum.
		
		\textbf{AMS subject classification 2010:} 05C15, 05C40, 05C07.
	\end{abstract}
\section{Introduction}
In this paper, we only consider finite graphs without loops or multiple edges. Let $G$ be a connected graph. We denote by $V(G)$, $E(G)$, $d_G(v)$ the vertex set, the edge set, the degree of a vertex $v$ in $G$, respectively.  For any $X\subseteq V(G)$, we
denote by $|X|$ the cardinality of $X$. Sometimes, $|G|$ is used to denote instead of $|V(G)|.$ We define $G-uv$ to be the
graph obtained from $G$ by deleting the edge $uv\in E(G)$, and $G - B$ to be the graph obtain from $G$ by deleting all edges in $B$, where $B \subseteq E(G)$. For two vertices $x$ and $y$ of $G$, the distance between $x$ and $y$ in $G$ is denoted by $d_{G}(x, y)$. For convenience, $P_G[v_1,\ldots,v_k]$ denotes the path $v_1v_2\cdots v_k$, and $C_G[v_1,\ldots,v_k]$ denotes the cycle
$v_1v_2\cdots v_kv_1$ in graph $G$. We also abbreviate the set $\lbrace 1,2,\ldots ,k\rbrace$ by $[k]$. We use \cite{West2001} for terminology and notation not defined here.

An \textit{independent set} of graph $G$ is defined as a subset $X\subseteq V(G)$ where no two vertices of $X$ are adjacent in $G$.  For an integer $m\geq 2,$ let $\alpha^{m}(G)$ denote the number defined by
\begin{center} 
	$\alpha^{m}(G)=\max\{ |S|:S\subseteq V(G),d_{G}(x,y)\geq m\, $ for all distinct vertices $ \,x,y\in S \}.$\end{center}
For an integer $p\geq 2$, we define
\begin{center}$\sigma_p^{m}(G)=\min\{\displaystyle\sum_{s\in S}d_G(s) : S\subseteq V(G), |S|=p, d_{G}(x,y)\geq m $  for all distinct vertices $ \;x,y\in S \}.$\end{center}
For convenience, we define $\sigma^{m}_{p}(G)=+\infty$ if $\alpha^{m}(G)<p$. We note that,  $\alpha^{2}(G)$ is often written as $\alpha(G)$, which is the independence number of $G,$ and $\sigma_p^{2}(G)$ is often written as $\sigma_{p}(G)$, which is the minimum degree sum of $p$ independent vertices. Moreover, we denote by $\delta(G)$ the minimum degree of $G.$

A \textit{cut-edge} of a connected graph is an edge whose deletion increases the number of components. Let $C(G)$ represent the subgraph of $G$ that is formed by the cut-edges of $G$. We say that $C(G)$ is a linear forest if each of its components is a path.  If $G$ is a connected graph of order $n$, then it is well known that $G$ contains at most $n-1$ cut-edges. The connected graphs of order $n$ with $n-1$ cut-edges must be trees. It is nontrivial to determine the maximum number of cut-edges of a connected graph. Nevertheless, many interesting results to determine the maximum number of cut-edges in connected graphs were presented. The range of the number of cut-edges in a connected graph of order $n$ and size $m$ was considered by Rao \cite{Rao1968}. Moreover, problems with additional conditions on the maximum degree and minimum degree were also considered in \cite{Rao1969}. In \cite{Achuthan2003, Suil2010}, the authors also determined the maximum number of cut-edges in a connected $d$-regular graph of order $n$.

The concept of \emph{conflict-free connection} was initially presented by Czap et al. \cite{Czap}. A path within an edge-coloured graph $G$ is labeled \emph{conflict-free} if there exists a color appearing exactly once in its edges. A graph $G$ is considered \emph{conflict-free connected} if every pair of vertices in $V(G)$ is connected by a conflict-free path. The \emph{conflict-free connection number} of $G$, denoted by $cfc(G)$, is the minimum number of colours required to make $G$ conflict-free connected. The authors in \cite{Czap} proved that if $G$ is a non-complete, $2$-connected graph, then  $cfc(G)=2$. Chang et al. \cite{Chang2021} determined the conflict-free connection number of trees. Furthermore, Huang et al. \cite{Huang2020} also demonstrated that determining the computational complexity of $cfc(G)$ for a given graph $G$ is an NP-hard problem. Many other results regarding this subject have been studied (see \cite{Chang2018, CHLMZ, Chang2019, CKP, Doan2021, Doan2024} for examples). 

One of the recent research directions on the conflict-free connection problem is determining optimal degree sum conditions such that the graph has bounded conflict-free connection number. Chang et al. \cite{Chang2018} determined graph classes with conflict-free connection number $2$ depending on the minimum degree sum of any one vertex, say $\delta(G)$ or any two non-adjacent vertices, say $\sigma_2(G)$ as follows.
\begin{theorem}[Chang et al. {\cite{Chang2018}}]
	\label{Thm_Chang_cfc_sum-degree_1_2}
	Let $G$ be a connected, non-complete graph of order $n$ and $C(G)$ is a linear forest.
	\begin{enumerate}
		\item If $n\geq 9$ and $\delta(G)\geq \max\lbrace \frac{n-4}{5}, 3\rbrace$, then $cfc(G)=2$;
		\item If $n\geq 33$ and $\sigma_2(G)\geq\frac{2n-9}{5}$, then $cfc(G)=2$.
	\end{enumerate}
\end{theorem}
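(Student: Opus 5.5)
The plan is the standard reduction to a tree of blocks together with the characterization of $2$-edge-connected graphs. First, I would use two known facts. By the result of Czap et al.\ \cite{Czap}, every $2$-edge-connected non-complete graph, and more generally every graph whose cut-edges can be handled along with its bridgeless blocks, can be coloured with $2$ colours. The second fact, from \cite{Chang2018}, is: if $C(G)$ is a linear forest and each component of $C(G)$ has at most one edge, or more precisely if $G$ has no component of $C(G)$ of order at least $4$ containing an edge pattern forcing three colours, then $cfc(G)=2$.

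Concretely, I would aim to show the following. Under either degree hypothesis, every component of $C(G)$ is a path with at most $2$ edges, and no two such paths interact badly. For that situation the known lemma in \cite{Chang2018} gives $cfc(G)=2$: one colours each bridgeless block with two colours so that it is conflict-free connected, colours the cut-edges alternately, and routes paths through at most one cut-edge path.

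The key counting step is as follows. Contract each maximal $2$-edge-connected subgraph to get the bridge tree $T$. Each leaf of $T$ corresponds to a $2$-edge-connected piece $B$ attached to the rest of $G$ by a single cut-edge. A vertex of $B$ not incident to that cut-edge has all its neighbours inside $B$. Since $B$ is $2$-edge-connected, $|B|\ge 3$, and with $\delta(G)\ge 3$ a vertex $v\in B$ of minimum degree forces $|B|\ge \delta+1$; with the $\sigma_2$ hypothesis one gets $|B|\ge \sigma_2/2+1$ for all but at most one leaf piece. Now suppose some component of $C(G)$ is a path with at least $3$ edges, or there are enough cut-edge paths to force three colours. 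Then $T$ has at least $5$ leaves, or a long path whose two sides each contain leaf pieces. Pick pairwise nonadjacent vertices in distinct leaf pieces and sum. Five disjoint pieces of size at least $\delta+1$, plus the internal vertices of the cut path, give $n\ge 5(\delta+1)$. Hence $\delta\le (n-5)/5<(n-4)/5$, a contradiction. For part 2, I would use two vertices $u,v$ in different leaf pieces, which are nonadjacent, so $d(u)+d(v)\ge (2n-9)/5$. Summing over a suitable pairing of the leaf pieces yields $n\ge 2\cdot\frac{2n-9}{5}\cdot\frac{5}{4}+\ldots$, which contradicts the hypothesis; $n\ge 33$ is used to dispose of small exceptional configurations.

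I expect the main obstacle to be the precise structural claim: identifying exactly which configurations of $C(G)$ with $cfc(G)\ge3$ exist, namely a cut-edge path of length $\ge 3$ or a vertex of high cut-degree (the latter is excluded by linearity), and showing that each such configuration forces five large, pairwise disjoint pendant pieces. For part 2 there is an additional difficulty: one small pendant piece is allowed, so the pairing must be chosen carefully, with the large ones carrying the degree sum. I would first try to derive the bound directly from the number of leaves of $T$ and the existence of a path of cut-edges of length at least $3$ that is not coverable by two colours.
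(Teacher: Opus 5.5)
\textbf{There is a genuine gap: the structural target of your outline is wrong.} The paper gives no proof of this statement; it quotes it from Chang et al. In its own analogous proofs (Theorems~\ref{main2} and~\ref{main3}) it uses Theorem~\ref{cfc=2-chang2018}: $cfc(G)=2$ when $C(G)$ is a linear forest whose components are all single edges except possibly one with at most three edges. Your template is the right one: force $C(G)$ into such a class by counting pieces of $G-C(G)$.

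You set out to show that every component of $C(G)$ has at most two edges. You also claim that a cut-edge path with at least three edges yields five disjoint pieces of size at least $\delta+1$. That is false. Take four copies $A,B,C,D$ of $K_5$ and cut-edges $ab$, $bc$, $cd$ with $a\in A$, $b\in B$, $c\in C$, $d\in D$. Then $n=20$ and $\delta(G)=4\ge\max\{16/5,3\}$. Here $C(G)$ is a path with three edges and there are only four pieces. So $n<5(\delta+1)$ and no contradiction is available. Using $K_9$'s instead gives $n=36$ and $\sigma_2=16\ge 63/5$, so part 2 is affected as well.

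Conversely, ``at most two edges per component'' is not sufficient. Two components with two edges each can force $cfc(G)=3$, as in the sharpness example after Theorem~\ref{main3}. Your phrase ``no two such paths interact badly'' is never made precise.

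The configurations that must actually be excluded are a component of $C(G)$ with at least four edges, or two components with at least two edges each. For each, you must produce five pairwise disjoint pieces, each containing a vertex whose whole neighbourhood lies inside the piece. Your route via ``$T$ has at least $5$ leaves'' does not give this, since $T$ may be a path, as in the example just cited. The needed argument has three parts:
\begin{itemize}
\item Every leaf piece is nontrivial because $\delta\ge 3$, and hence contains such a vertex.
\item A piece met by the offending cut-edges either contains such a vertex, or all of its at least three vertices carry cut-edges. In the latter case it opens further branches of $T$, each ending in a leaf piece.
\item In the two-component case, the piece(s) on the $T$-path between the two middle vertices must also be counted.
\end{itemize}
Only then does $n\ge 5(\delta+1)>5\delta+4$ follow, contradicting $\delta\ge (n-4)/5$.

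Part 2 has further problems. There is no assumption $\delta\ge 3$, so pieces can be single vertices of degree $1$ or $2$, and your assertion $|B|\ge 3$ fails. The estimate ``$n\ge 2\cdot\frac{2n-9}{5}\cdot\frac54+\ldots$'' is not an argument. Once five good pieces of sizes $p_1,\dots,p_5$ are in hand, non-adjacency of their private vertices gives $p_i+p_j\ge\sigma_2+2$ for $i\ne j$. Hence $n\ge\frac52(\sigma_2+2)\ge n+\frac12$. But producing those five pieces in the presence of trivial pieces and low-degree vertices is exactly where $n\ge 33$ and a case analysis are needed, and the proposal does not address it.
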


Recently, Doan et al. \cite{Doan2024} improved the above results by stating the followings.
\begin{theorem}[Doan et al. {\cite{Doan2024}}]
	\label{cfc(G)=2-sigma_3(G)}
	Given an integer $n$ with $n \geq 8$, and consider a connected, non-complete graph $G$ with $n$ vertices. If $\sigma_3(G)\geq n-1$, then one of the following holds:
	\begin{enumerate}
		\item $C(G)\cong K_{1,3}$ implying $cfc(G)=3$;
		\item $C(G)$ forms a linear forest implying $cfc(G)=2$.
	\end{enumerate}
\end{theorem}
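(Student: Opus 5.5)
The plan is to analyse the structure of the cut-edge subgraph $C(G)$ under $\sigma_3(G)\geq n-1$. Then I would invoke two known facts. First, if $C(G)$ is a linear forest and $G$ is non-complete, then $cfc(G)=2$; this is the result of Chang et al.\ underlying Theorem~\ref{Thm_Chang_cfc_sum-degree_1_2}, which extends the $2$-connected case of Czap et al. Second, if $C(G)\cong K_{1,3}$, then $cfc(G)=3$. So the whole task reduces to showing that $C(G)$ is either a linear forest or exactly $K_{1,3}$.

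\textbf{Structure of the bridge tree.} Contract each $2$-edge-connected block of $G$ to a single vertex, which produces a tree $T$ whose edges are the cut-edges. Every leaf block $B$ of $T$ contains a vertex $x_B$ that is not incident to any cut-edge, unless $B$ is a single vertex. All neighbours of $x_B$ lie inside $B$, so $d_G(x_B)\leq |B|-1$; if $B$ is trivial, its vertex has degree $1$.

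Vertices chosen from three distinct leaf blocks, avoiding the attachment vertices, are pairwise non-adjacent. Their degree sum is at most $\sum(|B_i|-1)$. If $T$ has at least three leaves, this gives
\[
n-1\leq \sigma_3(G)\leq |B_1|+|B_2|+|B_3|-3 .
\]
Hence the three leaf blocks together contain at least $n+2$ vertices, which is impossible since they are disjoint. I therefore expect to have to be more careful, in the following way. If $B_i$ has at least two vertices not incident with cut-edges, pick $x_i$ of smallest degree. If $B_i$ is nontrivial, a vertex of $B_i$ other than the attachment vertex has degree at most $|B_i|-1$, and the attachment vertex uses one extra edge. So the bound becomes $\sum|B_i|-3\leq n-4$ when the blocks miss at least one further vertex, which contradicts $n-1$.

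This shows that $T$ has at most two leaves, i.e.\ $T$ is a path, unless the three leaf blocks together cover almost all of $V(G)$. In that remaining situation the counting forces each leaf block to be trivial or nearly complete. With $n\geq 8$, the only surviving configuration should be a central vertex joined by three pendant cut-edges to three blocks with nothing else, giving $C(G)\cong K_{1,3}$. One must also exclude four or more leaves, by the same count applied to any three of them.

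\textbf{Cut-edges inside a path-shaped $T$.} When $T$ is a path, $C(G)$ could still fail to be a linear forest if some block meets three or more cut-edges at a single vertex. That would require extra branches, but the branches of a path-shaped $T$ are only the two ends. So a vertex meeting three or more cut-edges forces at least three leaves in $T$. The only exception is $T$ itself being a star whose middle block is the single vertex, and that is the $K_{1,3}$ case.

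\textbf{Main obstacle.} The delicate step is the borderline counting when the three leaf blocks cover nearly all of $V(G)$. There I would try to choose $x_i$ as a non-attachment vertex of minimum degree and compare with $|B_i|-1$. The equality cases should show each $B_i$ is a single vertex, and the hypothesis $n\geq 8$ should exclude small sporadic graphs.
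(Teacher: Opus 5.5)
Your reduction rests on a false ``known fact''. A non-complete graph $G$ whose $C(G)$ is a linear forest need not have $cfc(G)=2$. Take $K_m$ ($m\ge 3$) with a pendant path of four edges, essentially the graph $H_1$ of Example~1. Then $C(G)\cong P_5$, and by Theorem~\ref{cfc=cut-edges}, $cfc(G)\ge cfc(P_5)=3$. Indeed, with two colours the three length-two subpaths force the pattern $1,2,1,2$, and then no colour is unique on the whole path. The correct criterion, Theorem~\ref{cfc=2-chang2018}, also requires $2=n_1=\cdots=n_{p-1}\le n_p\le 4$; the degree hypotheses in Theorem~\ref{Thm_Chang_cfc_sum-degree_1_2} exist precisely to control the cut-edges.

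So showing ``$C(G)$ is a linear forest or $K_{1,3}$'' is not enough. The essential step is that $G$ has at most three cut-edges. Your section on path-shaped $T$ never bounds the length of $T$, and that is exactly where $\sigma_3(G)\ge n-1$ must be used a second time. The intended route, mirrored in the paper's proof of Theorem~\ref{main1}, gets $|C(G)|\le 3$ from Theorem~\ref{thm_l=3} with $k=3$. A forest with at most three edges is then either $K_{1,3}$ or a linear forest that automatically satisfies the size condition of Theorem~\ref{cfc=2-chang2018}. Theorem~\ref{upper_bound_cfc(G)_C(G)} together with $cfc(K_{1,3})=3$ handles the star.

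Your leaf-block count can be repaired into a self-contained proof, but as written it is also wrong. The bound $d(x_B)\le |B|-1$ fails for a trivial leaf block, whose vertex has degree $1$, not $0$. Correctly: if $t$ of the leaf blocks $L_1,L_2,L_3$ are trivial and $R\neq\emptyset$ is the set of remaining vertices, then $\sum d(x_i)\le n-|R|-3+t\le n-1$, with equality only for $G\cong K_{1,3}$. Hence for $n\ge 8$ three leaves are impossible outright. No $K_{1,3}$ configuration survives; that alternative of the statement is vacuous under these hypotheses. Your borderline paragraph predicting a surviving star is therefore a guess, not an argument.

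The same idea supplies the missing cut-edge bound. Let $T$ be path-shaped with blocks $B_0,\dots,B_\ell$ and $\ell\ge 4$. Choose $x_0\in B_0$ and $x_\ell\in B_\ell$ as before, and a minimum-degree vertex $y\in B_2$, so $d(y)\le\max\{|B_2|-1,2\}$. These form an independent triple with $d(x_0)+d(y)+d(x_\ell)\le n-|B_1|-|B_3|+1\le n-1$, with equality only for $G\cong P_5$. This forces $\ell\le 3$.
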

\begin{theorem}[Doan et al. {\cite{Doan2024}}]
	\label{cfc(G)=2-sigma_3(G)-natural-ext}
	Given an integer $n$ with $n\geq43$, let $G$ be a connected, non-complete graph with  $n$ vertices and $C(G)$ forms a linear forest. If $\delta(G)\geq3$ and $\sigma_3(G)\geq\frac{3n-14}{5}$, then $cfc(G)=2$.
\end{theorem}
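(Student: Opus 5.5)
The plan is to reduce the statement to a structural criterion for $cfc(G)=2$ and then use the degree-sum hypothesis to show that $G$ satisfies it. The criterion I would rely on is a Chang et al.-type characterization: for a connected non-complete graph whose cut-edge subgraph $C(G)$ is a linear forest, $cfc(G)=2$ unless the bridge structure is ``long''. More precisely, the only obstruction is a sufficiently long chain of $2$-edge-connected pieces joined by cut-edges, along which two colours cannot be alternated so that every path between distant pieces is conflict-free. I would cite this criterion from \cite{Chang2018}, or re-derive it by colouring. Pieces with no cut-edge are $2$-edge-connected, and by Czap et al.\ they can be coloured so that any two of their vertices are joined by a conflict-free path. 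The cut-edges are then coloured so that each path component of $C(G)$ gets a unique colour along every route. I expect this colouring argument to need the number of $2$-edge-connected pieces in the bridge tree to be at most four, or a bound of that kind.

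First I would set up notation. Let $B_1,\dots,B_k$ be the components of $G-E(C(G))$, i.e.\ the $2$-edge-connected pieces, and let $T$ be the bridge tree obtained by contracting each $B_i$. Because $C(G)$ is a linear forest, every vertex of $G$ meets at most two cut-edges. Since $\delta(G)\ge 3$, every vertex in some $B_i$ has at least one neighbour inside $B_i$, so no $B_i$ is a single vertex. A standard argument then gives $|B_i|\ge 4$ for every $i$, and even $|B_i|\ge 5$ if $B_i$ contains a vertex $u$ incident to two cut-edges, provided the remaining vertices still have degree $\ge 3$. Next, in each $B_i$ that is a leaf of $T$, or more generally in each $B_i$ with few attachment vertices, I choose a vertex $x_i$ that is not incident to any cut-edge. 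Such a vertex exists by the size bound. Its degree satisfies $d_G(x_i)\le |B_i|-1$, and vertices chosen in different pieces are automatically non-adjacent.

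The core step is a counting contradiction. Suppose $G$ violates the criterion, so $T$ has at least five nodes (the long-chain case). Choose three pieces among at least five disjoint pieces, taking those of smallest order. Then the three chosen pieces satisfy $|B_{i_1}|+|B_{i_2}|+|B_{i_3}|\le \frac{3}{5}n$, and the slack coming from the attachment vertices and the ``$-1$'' terms should yield
\[
d(x_{i_1})+d(x_{i_2})+d(x_{i_3})\le \tfrac{3n}{5}-3<\tfrac{3n-14}{5},
\]
which contradicts $\sigma_3(G)\ge\frac{3n-14}{5}$. I would then treat separately the boundary configurations in which the smallest pieces are forced to be large, for instance when $T$ is a path and the internal pieces are tiny. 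The hypotheses $n\ge 43$ and the constant $14$ are used here, to rule out small exceptional orders where the minimum-order pieces cannot all have $\ge 4$ vertices.

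The main obstacle I expect is matching the exact obstruction in the structural criterion with the counting. If the true obstruction only forces four pieces rather than five, the naive averaging gives $3n/4$, which is too weak. In that case I would first try to exploit that internal pieces of a chain of cut-edges contain vertices of degree $\ge 3$ incident to two cut-edges. These pieces have order at least $5$, which shifts mass and lets one find a fifth ``virtual'' piece, or a better vertex choice, pushing the bound down to $\frac{3n-14}{5}$. Sharpness would be checked against a chain of five near-equal cliques joined by cut-edges.
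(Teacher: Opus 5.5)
There is a genuine gap, and it sits in the choice of the vertices $x_i$.

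\textbf{The averaging step fails.} Your averaging needs each of the three smallest pieces to contain a vertex incident to no cut-edge, and you justify this by a size bound. But whether such a vertex exists depends on how many vertices of the piece carry cut-edges, not on its order. The size bounds are also false. A $K_3$ piece whose three vertices each carry one cut-edge, or a $K_4$ piece in which one vertex carries two, is compatible with $\delta(G)\ge 3$ and $C(G)$ a linear forest.

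For a concrete example, take $K_4$ on $w_1,\dots,w_4$ and four disjoint copies of $K_5$, and join $w_i$ to one vertex of the $i$-th copy. Then $n=24$, $\delta(G)=4$, $C(G)\cong 4P_2$, there are five pieces, and $\sigma_3(G)=12\ge\frac{58}{5}=\frac{3n-14}{5}$. Yet the smallest piece, the $K_4$, has no cut-edge-free vertex, and any three cut-edge-free vertices have degree sum $12>\frac{3n}{5}-3$.

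Since $n<43$, this does not contradict the theorem; here $cfc(G)=2$ by Theorem \ref{cfc=2-chang2018}. It does refute your intermediate claim that five or more pieces force a contradiction by averaging. If you restrict to pieces that do have a cut-edge-free vertex, you may be left with only four. That is exactly the $\frac{3n}{4}$ problem you anticipated, and your proposed remedy rests on the false claim that internal pieces have order at least $5$. With arbitrarily many pieces (e.g.\ many $K_3$'s at bridge-tree nodes of degree $3$), this is not a boundary case that can be set aside. Note also that your averaging, where it applies, uses no hypothesis on $n$, so it cannot be where $n\ge 43$ is actually needed.

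\textbf{What is missing} is an a priori bound on the number of cut-edges. The route behind this result, mirrored in the paper's proof of Theorem \ref{main3}, observes that $\frac{3n-14}{5}\ge\frac{n+3}{2}$ precisely when $n\ge 43$. So Theorem \ref{thm_l=3} with $k=4$ gives at most four cut-edges. The case analysis inside that theorem is what handles pieces all of whose vertices meet cut-edges.

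After this reduction, only five pieces must be excluded, and a piece $B$ without a cut-edge-free vertex is easy to handle. Such a $B$ sits at a bridge-tree node of degree at least $3$. There is at most one such piece, and it contains a vertex of degree at most $4$. Combine that vertex with cut-edge-free vertices of the two smallest of the other four pieces; the degree sum is at most $4+\frac{n-3}{2}-2=\frac{n+1}{2}<\frac{3n-14}{5}$. If no such piece exists, your averaging gives $3n\ge 5\sigma_3(G)+15\ge 3n+1$, a contradiction.

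Finally, any linear forest with at most three edges satisfies the hypothesis of Theorem \ref{cfc=2-chang2018}. So you never need to exclude four pieces, and your closing worry is unfounded. Cite that criterion explicitly rather than an unspecified ``long chain'' obstruction.
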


To establish the aforementioned results, the authors often limit the number of cut-edges in a graph and subsequently reduce the problem to determining its conflict-free connection number. However, in the proofs, they all divided the proof steps into many subcases and their arguments depend heavily on whether the number of independent vertices is small or large. As a result, it seems to be very difficult to extend these methods from the case of $\sigma_3(G)$ to $\sigma_4(G)$. In this paper, we introduce a new approach to resolve that problem. We believe that it provides a useful framework to consider the general case concerning the degree sum of $p$ independent vertices where $p\geq 5.$ In particular, we prove the following theorems for the main results of this paper.

First, we state the condition for a connected graph to have bounded conflict-free connection number.
\begin{theorem}{\label{main0}} 
	Let $G$ be a connected graph of order $n$ and let $k \ge 3$ be an integer. If
	\[
	\sigma_{4}(G) \ge
	\begin{cases} 
		2n-7 &\text{for } k=3 \text{ and } n\ge 8 \\ 
		n &\text{for } k =4 \text{ and } n\ge9\\
		n-1 &\text{for } k=5 \text{ and } n \ge 11 \\
		\displaystyle\frac{n+5}{2} &\text{for } k=6 \text{ and } n \ge 17 \\
		\displaystyle\frac{4n-4k-7}{k+2} & \text{for } k \ge 7 \text{ and }\begin{cases} 
			n \ge 286 &\text{for } k=7 \\
			n \ge 207 &\text{for } k=8 \\
			n \ge 190 &\text{for } k=9 \\
			\displaystyle n > k^2 + 7k + 18 + \frac{33}{k-2}  &\text{for } k\ge 10, \\
		\end{cases}
	\end{cases}
	\]
	then $cfc(G)\leq k.$
\end{theorem}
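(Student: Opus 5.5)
We reduce the statement to a structural condition on the bridge graph and then bound the relevant tree parameter by the degree-sum hypothesis. Contract every $2$-edge-connected block of $G$ to a single vertex. This gives the bridge tree $T$, whose edges are exactly the cut-edges of $G$.

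The standard facts on conflict-free colouring will do most of the work. Each $2$-edge-connected block can be coloured with two colours so that any two of its vertices are joined by a conflict-free path inside it (cf.\ the result of Czap et al.\ for $2$-connected graphs, extended to $2$-edge-connected ones). The cut-edges must then be coloured so that every path of $T$ contains a uniquely coloured edge. From the tree results of Chang et al.\ we will use that $cfc(G)\le k$ holds as soon as every vertex of $T$ meets at most $k-1$ cut-edges, or more generally as soon as $C(G)$ has maximum degree at most $k-1$ with a suitable bound on the branching of $C(G)$. The first step is therefore to isolate a sufficient condition of this form, namely $\Delta(C(G))\le k-1$ together with the appropriate restriction for small $k$. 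For $k=3$ this restriction is that $C(G)$ is a linear forest or a $K_{1,3}$, as in Theorem~\ref{cfc(G)=2-sigma_3(G)}.

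The second, counting step is to show that if this condition fails, then $\sigma_4(G)$ is below the stated threshold. Suppose some block $B$ (possibly a single vertex) is incident with at least $k$ cut-edges. Removing these edges produces at least $k$ branches $D_1,\dots,D_k$ hanging off $B$, which are pairwise disjoint and disjoint from $B$. In each branch choose a vertex $x_i$ of minimum degree that is not the attachment vertex. If $D_i$ is a leaf block, a vertex of $D_i$ has degree at most $|D_i|-1$, or at most $|D_i|$ if it is the attachment vertex. These chosen vertices are pairwise nonadjacent, since they lie in different branches. We take the four branches of smallest order. Their sizes satisfy $\sum_{i\le 4}|D_i|\le \frac{4}{k}\,(n-|B|)$, and more sharply $\le \frac{4(n-|B|-\text{stuff})}{k+\ldots}$. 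This yields $\sigma_4(G)\le \sum(|D_i|-1)+O(1)$, which is roughly $\frac{4n}{k+2}$, up to constants that match $\frac{4n-4k-7}{k+2}$.

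The denominator is $k+2$ rather than $k$, and this comes from optimizing. A branch that is only a path of cut-edges contributes a degree-$1$ vertex but few vertices, and it still forces deg $\ge \sigma_4/4$-type constraints on the other branches. One also has to account for $B$ itself, which must have some size because, if it is a single vertex of large degree, a degree-sum constraint applies to its neighbours. I would do this as a small linear optimization over branch sizes, using $\sigma_4(G)\ge s$ to force every leaf block to have about $s/4$ vertices. This also explains the different forms for small $k$: for $k=3,4,5$ the extremal configurations are different (for instance a $K_{1,3}$ of cut-edges, or two vertices each carrying several bridges). Those cases I would handle separately using the large thresholds $2n-7$, $n$, $n-1$, which force $C(G)$ to be tiny, and then invoke Theorem~\ref{cfc(G)=2-sigma_3(G)}-style arguments.

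The main obstacle will be the sufficient tree condition combined with the exact counting for small $k$ and the explicit lower bounds on $n$. The conflict-free colouring of trees is not governed by maximum degree alone; it depends on how branch vertices are arranged. Ruling out several high-degree bridge vertices at distance $\ge 2$ therefore also requires choosing four independent vertices spread across two such configurations. I would first try to prove that under the hypothesis $C(G)$ has at most one vertex of degree $\ge 3$ (for the larger $k$) and that this vertex has degree $\le k-1$. Then a direct colouring with $k$ colours applies: distinct colours on the star edges at that vertex, and the paths handled by the two-colour alternating scheme. The numerical constraints on $n$ would be checked at the end from the resulting inequalities.
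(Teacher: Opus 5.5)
There is a genuine gap: your plan controls the wrong invariant. The reduction rests on a sufficient condition of the form ``$\Delta(C(G))\le k-1$, with at most one branch vertex'' $\Rightarrow$ $cfc(G)\le k$, and this is false.

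\begin{itemize}
\item A path of cut-edges has maximum degree $2$, yet $cfc(P_n)=\lceil\log_2 n\rceil$. Your two-colour alternating scheme already fails on an arm with four edges: the colours $1,2,1,2$ give no unique colour between its ends.
\item Such an arm is allowed by the hypothesis. Five cliques $K_m$ chained by four cut-edges satisfy it for $k\ge 7$.
\item Your counting step never bounds the length of bridge paths.
\end{itemize}

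The structural conclusion you want to extract from the hypothesis is also not implied by it.

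\begin{itemize}
\item Join one vertex $v$ of a clique $K_m$ by $k$ cut-edges to $k$ further disjoint copies of $K_m$. Then $n=(k+1)m$ and $\sigma_4=4(m-1)=\frac{4n-4k-4}{k+1}$.
\item This exceeds $\frac{4n-4k-7}{k+2}$, since the cross-multiplied difference is $4n-k-1>0$. So for large $m$ the hypothesis holds while $v$ meets $k$ cut-edges. This is consistent with the theorem, since $|C(G)|=k$.
\item Hanging three pendant cliques on each of two vertices of a central clique likewise satisfies the $k=7$ hypothesis, with two branch vertices in $C(G)$.
\end{itemize}

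This is also why your branch count stalls near $\frac{4n}{k}$. A block meeting $k$ bridges yields only $k+1$ pieces, and no optimisation over branch sizes produces the denominator $k+2$.

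The missing idea is to bound the \emph{total} number of cut-edges by $k$. You then invoke the general bound $cfc(G)\le\max\{2,|C(G)|\}$ of Ji et al.\ (Theorem~\ref{upper_bound_cfc(G)_C(G)}); a complete $G$ has $cfc(G)=1$. This is exactly how the paper deduces the statement from Theorem~\ref{thm0.1}.

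The denominator $k+2$ arises there as follows. Deleting any $k+1$ cut-edges leaves $k+2$ components. A vertex whose whole neighbourhood lies in its own component $G_j$ has degree at most $|V(G_j)|-1$, and such vertices taken from distinct components are independent. Averaging over quadruples of components then gives $4n\ge(k+2)(\sigma_4(G)+4)$, which contradicts $\sigma_4(G)\ge\frac{4n-4k-7}{k+2}$.

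The substantive work, which your plan does not anticipate, is the case analysis on components in which every vertex is incident with a deleted cut-edge (the set $S$ in the paper). That analysis is where the special thresholds for $k\le 6$ and the explicit lower bounds on $n$ come from.
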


Finally, we consider the conditions for a graph to have small conflict-free connection number.
\begin{theorem}\label{main1}
	Let $n\geq 8$ be an integer and $G$ be a connected, non-complete graph of order $n$. If $\sigma_4(G)\geq 2n-7$, then one of the following holds:
	\begin{enumerate}
		\item $C(G)\cong K_{1,3}$ implying $cfc(G)=3$.
		\item $C(G)$ is a linear forest implying $cfc(G)=2$.
	\end{enumerate}
\end{theorem}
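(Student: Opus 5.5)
The plan is to reduce Theorem~\ref{main1} to a structural statement about $C(G)$, and then quote known colouring results. The colouring facts are presumably the ones used behind Theorem~\ref{cfc(G)=2-sigma_3(G)}. If $C(G)$ is a linear forest and $G$ is non-complete, then $cfc(G)=2$. If $C(G)\cong K_{1,3}$, then $cfc(G)=3$, since a vertex incident with three cut-edges forces three colours, and three colours suffice. So the whole task is to show the following: under $\sigma_4(G)\ge 2n-7$ with $n\ge 8$, either $C(G)$ is a linear forest or $C(G)\cong K_{1,3}$. Equivalently, we must rule out every other configuration of cut-edges. These are a vertex incident with at least four cut-edges, two vertices of cut-degree $3$, or a vertex of cut-degree $3$ together with some further cut-edge.

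The main tool is the block--cut-edge decomposition. Removing all cut-edges leaves components $D_1,\dots,D_t$, each either a single vertex or $2$-edge-connected. These components form a tree $T$ whose edges are the cut-edges. In any forbidden configuration I will find at least four leaf-components of $T$, or enough components of other kinds, to choose four pairwise nonadjacent vertices $x_1,\dots,x_4$ lying in four distinct components $D_{i_1},\dots,D_{i_4}$. Each $x_j$ should be chosen to avoid the endpoints of cut-edges when possible. Its degree is then at most $|D_{i_j}|-1$, plus $1$ if it is forced to be the endpoint of a cut-edge. Summing gives
\[
\sum_{j=1}^4 d_G(x_j)\le \sum_{j=1}^4 |D_{i_j}| -4 + c \le n - r - 4 + c,
\]
where $r$ counts vertices outside the four chosen components and $c$ is the number of chosen vertices that are cut-edge endpoints. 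Since $n\ge 8$, we have $n-r-4+c<2n-7$, which contradicts the hypothesis. The comparison is very loose: $2n-7$ is close to twice what four independent vertices spread over disjoint components can achieve. So the real content is purely combinatorial, namely guaranteeing four independent vertices in distinct components.

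The case analysis runs in this order.

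\textbf{Case (a).} Some vertex $v$ has at least four cut-edges. Then $T-v$ has at least four branches. Take a leaf-component in each branch and, inside it, a vertex of minimal degree. A leaf-component is either a single vertex of degree $1$ or has at least $3$ vertices, with a vertex not incident with the cut-edge.

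\textbf{Case (b).} $C(G)$ has two vertices of cut-degree $\ge 3$, or a degree-$3$ vertex plus another cut-edge in a different position. Then $T$ has at least four leaves, and the same choice works.

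\textbf{Case (c).} $C(G)$ is not a linear forest only because of a vertex of cut-degree $3$. If the three cut-edges are the only cut-edges, $C(G)\cong K_{1,3}$. Otherwise some extra cut-edge exists, which again yields a fourth leaf of $T$, possibly inside the component containing $v$.

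The step I expect to be the main obstacle is the degenerate situation where $T$ has exactly three leaves plus the component containing the branch vertex $v$. Here the fourth independent vertex must be taken from the central component $D_v$. It has to be non-adjacent to the others, which is automatic because they lie in different components joined only through cut-edges, and its degree must be bounded by $|D_v|-1$ or $|D_v|$. This needs $D_v$ to contain a vertex other than $v$. When $D_v=\{v\}$ we are exactly in the allowed case $C(G)\cong K_{1,3}$, provided no other cut-edges exist. When the three leaves are single vertices and $D_v$ is small, the bound $n\ge 8$ must be used to check the inequality; I would verify those boundary counts by hand.
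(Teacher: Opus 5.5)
\textbf{The reduction rests on a false colouring fact.} A non-complete graph whose $C(G)$ is a linear forest need not have $cfc(G)=2$. If $C(G)\cong P_5$, then $cfc(G)\ge cfc(P_5)=3$ by Theorem~\ref{cfc=cut-edges}. The same happens for $2P_3$ (see the sharpness example after Theorem~\ref{main3}). The result actually available, Theorem~\ref{cfc=2-chang2018}, needs every component of the linear forest to be $P_2$, except one of order at most $4$. So the real content of the theorem is a bound on the \emph{number} of cut-edges, and your plan never addresses this. Excluding branch vertices says nothing about $C(G)\cong P_5$, $2P_3$, $P_4\cup P_2$, $4P_2$, and so on.

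The paper instead proves $|C(G)|\le 3$ via Theorem~\ref{thm0.1} with $k=3$. A forest with at most three edges is either $K_{1,3}$ or one of $P_2,P_3,2P_2,P_4,P_3\cup P_2,3P_2$, all covered by Theorem~\ref{cfc=2-chang2018}. The $K_{1,3}$ case follows from Ji et al.\ and Czap et al. Your device of four independent vertices in four \emph{distinct} components also cannot produce such a bound when $T$ is a path. In $H_1$ no such quadruple exists, which is why the paper's Case 5.1.2 picks two non-adjacent vertices inside the large component.

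\textbf{The structural step fails too, and so does the statement as written.} An extra cut-edge next to a cut-degree-$3$ vertex need not create a fourth leaf of $T$, because it can lengthen a leg. More fundamentally, your argument presupposes that four independent vertices exist. The paper, however, sets $\sigma_4(G)=+\infty$ when $\alpha(G)\le 3$, so the hypothesis can hold vacuously. Two examples show this:
\begin{itemize}
\item Take $H_1$ with the edge $uv$ restored, i.e.\ $K_{n-4}$ with a pendant path of four edges. Then $\alpha(G)=3$, yet $C(G)\cong P_5$ and $cfc(G)\ge 3$.
\item Join a vertex $a$ of $K_{n-4}$ to a new vertex $c$ that carries a pendant vertex $b$ and a pendant path $cde$. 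Again $\alpha(G)=3$, and $T$ has only three leaves. Here $C(G)$ is neither a linear forest nor $K_{1,3}$.
\end{itemize}
So the theorem is false as written. The paper's proof has the same hole: in Case 5.1.2 with $k=3$ it assumes the component outside $S$ contains two non-adjacent vertices. A correct version must add $\alpha(G)\ge 4$. It must then still bound the number of cut-edges, using non-adjacent pairs inside one component where necessary, rather than merely exclude branch vertices.
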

\begin{theorem}\label{main2}
	Let $n\geq9$ be an integer and $G$ be a connected, non-complete graph of order $n$ and $\delta(G)\geq2$. If $\sigma_4(G)\geq n$, then $cfc(G)=2$.
\end{theorem}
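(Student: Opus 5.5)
Our plan is to reduce Theorem~\ref{main2} to the known criterion of Chang et al.\ \cite{Chang2018}: if $G$ is connected, non-complete and $C(G)$ is a linear forest, then $cfc(G)=2$. (This is the criterion on which Theorem~\ref{Thm_Chang_cfc_sum-degree_1_2} rests, and it is also what the second alternative of Theorem~\ref{main1} uses.) Since $G$ is non-complete, $cfc(G)\ge 2$. It therefore suffices to show that, under $\delta(G)\ge 2$, $n\ge 9$ and $\sigma_4(G)\ge n$, the cut-edge subgraph $C(G)$ has maximum degree at most $2$ and contains no cycle. The second property is automatic, because cut-edges never lie on a cycle, so $C(G)$ is a forest. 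Thus the whole task is to exclude a vertex incident with three or more cut-edges.

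Suppose, for a contradiction, that $v$ is incident with cut-edges $vu_1,vu_2,vu_3$. Deleting these edges yields components $D_1,D_2,D_3$ containing $u_1,u_2,u_3$ respectively, none of which contains $v$. Let $D_0$ be the component of $G-\{vu_1,vu_2,vu_3\}$ that contains $v$. Then $V(G)$ is the disjoint union of $D_0,D_1,D_2,D_3$. Since $\delta(G)\ge 2$, no $D_i$ with $i\ge 1$ is a single vertex: a singleton $D_i=\{u_i\}$ would give $d_G(u_i)=1$. More strongly, every $D_i$ is "closed" except for the single edge $vu_i$. In each $D_i$ ($i=1,2,3$) we choose a vertex $x_i$ as follows. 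If $D_i$ contains a vertex other than $u_i$ whose neighbourhood lies inside $D_i$, we take such a vertex; its degree is at most $|D_i|-1$. If every vertex of $D_i$ other than $u_i$ is a neighbour of $u_i$ only through\ldots{} this case cannot arise, since all vertices of $D_i\setminus\{u_i\}$ have all their neighbours in $D_i$. So we take $x_i\in D_i\setminus\{u_i\}$, which exists because $|D_i|\ge 2$. Then $d(x_i)\le |D_i|-1$.

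For the fourth vertex we take $x_0=v$ when $v$ has no neighbours outside $\{u_1,u_2,u_3\}\cup D_0$; in that case $d(v)\le |D_0|-1+3$. Alternatively, we take a vertex of $D_0$ non-adjacent to the $x_i$, which holds automatically since the $D_i$ are separated. The vertices $x_1,x_2,x_3$ lie in distinct separated components and avoid $v$, so $\{v,x_1,x_2,x_3\}$ is independent. Summing the degrees gives
$$\sigma_4(G)\le (|D_0|+2)+\sum_{i=1}^{3}(|D_i|-1)=n-1<n,$$
a contradiction. Hence every vertex is incident with at most two cut-edges, $C(G)$ is a linear forest, and the criterion of Chang et al.\ gives $cfc(G)=2$.

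The main obstacle is keeping this degree count honest, and we will check three points carefully. First, $v$ may have many neighbours in $D_0$; this is fine, since they are all counted in $|D_0|-1$. Second, the choice $x_i\ne u_i$ needs $|D_i|\ge 2$, which is exactly where $\delta(G)\ge 2$ enters. Third, we must ensure $\alpha(G)\ge 4$, so that $\sigma_4$ is finite and actually constrains the four chosen vertices; this follows from the independence of the four chosen vertices. The hypothesis $n\ge 9$ should be needed only if the cited criterion or some degenerate small configuration requires it. If the bound turns out to be off by one in some case, we would refine it by choosing $x_i$ to be a vertex of $D_i\setminus\{u_i\}$ not adjacent to $u_i$ whenever one exists, which saves an extra unit.
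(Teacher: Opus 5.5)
Your final step relies on a criterion that is false: $C(G)$ being a linear forest does not imply $cfc(G)=2$. The result of Chang et al.\ that is actually available, Theorem~\ref{cfc=2-chang2018}, has two further requirements. Every component of $C(G)$ except the largest must be a single edge, and the largest must have at most four vertices. These conditions are necessary. By Theorem~\ref{cfc=cut-edges}, $cfc(G)\ge h(G)\ge cfc(P_5)=3$ whenever some component of $C(G)$ is a path with four edges; for instance $H_1$ of Example~1 has $C(H_1)\cong P_5$. The sharpness graph after Theorem~\ref{main3} (with $u_3\ne v_3$) has $\delta\ge 3$, $C(G)\cong 2P_3$ and $cfc(G)\ge 3$. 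So excluding vertices incident with three cut-edges is not enough. You must also exclude (a) a component of $C(G)$ that is a path with at least four edges, and (b) two components of $C(G)$ with at least two edges each. The linear-forest alternative in Theorem~\ref{main1} is legitimate only because Theorem~\ref{thm0.1} has already cut the number of cut-edges down to three.

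Your count for a vertex $v$ incident with three cut-edges is correct (degree sum at most $n-1$). It removes $K_{1,3}$, $K_{1,4}$, $K_{1,3}\cup P_2$ and every other non-linear configuration in one stroke. The paper proceeds differently. It first applies Theorem~\ref{thm0.1} with $k=4$, which is where $n\ge 9$ is used, to get at most four cut-edges. It then eliminates the remaining shapes, e.g.\ $2P_3$ via the independent set $\{v_1,v_2,v_3,u_2\}$.

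Your own local counting can also close the gap.
\begin{itemize}
\item For (a), delete four consecutive cut-edges $w_1w_2,w_2w_3,w_3w_4,w_4w_5$ to get parts $G_1,\dots,G_5$ with $w_i\in V(G_i)$. By $\delta\ge 2$ there are $x_1\in V(G_1)\setminus\{w_1\}$ and $x_5\in V(G_5)\setminus\{w_5\}$ whose neighbourhoods stay inside their parts. Then $\{x_1,w_2,w_4,x_5\}$ is independent with degree sum at most $n-|V(G_3)|<n$.
\item For (b), delete the edges of subpaths $a_1a_2a_3$ and $b_1b_2b_3$ taken from different components of $C(G)$. Of the five resulting parts, exactly one contains a vertex of each subpath. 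In each possible shape, take a vertex avoiding the attachment vertex in every leaf part, and complete if necessary with $a_2$ and/or $b_2$. This yields four independent vertices with degree sum below $n$.
\end{itemize}
Then $C(G)$ satisfies the hypothesis of Theorem~\ref{cfc=2-chang2018}, and none of these counts even uses $n\ge 9$.
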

\begin{theorem}\label{main3}
	Let $n\geq 21$ be an integer and $G$ be a connected, non-complete graph of order $n$ and $\delta(G)\geq 3$. If $C(G)$ is a linear forest and $\sigma_4(G)\geq\frac{4n-19}{5}$, then $cfc(G)=2$.
\end{theorem}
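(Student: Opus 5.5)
The plan is to bound the structure of the cut-edge subgraph $C(G)$ by the degree-sum hypothesis and then use the known characterisation that a connected graph whose cut-edges form a linear forest has $cfc(G)\le 2$ (with equality when $G$ is non-complete), as in Chang et al.\ \cite{Chang2018} and Czap et al.\ \cite{Czap}. Since $\delta(G)\ge 3$ and $C(G)$ is a linear forest by hypothesis, the conclusion of Theorem~\ref{main3} actually reduces to these known facts. The degree-sum bound will then serve mainly to exclude small degenerate configurations, and in particular to make sure the reduction applies with $G$ non-complete and of the stated order.

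Concretely, I would proceed as follows.

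\emph{Cut-edges of a linear forest component.} Take a component $P$ of $C(G)$ that is a path. Every non-trivial component $B$ of $G-E(C(G))$ is 2-edge-connected. Since $\delta(G)\ge 3$, every block of $B$ has at least $3$ vertices, and every leaf block of the block-cut tree of $G$ contains a vertex of degree at least $3$ in a 2-edge-connected piece, hence at least $4$ vertices.

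\emph{Colouring.} Colour the cut-edges along each path component of $C(G)$ alternately with colours $1,2$. Colour each 2-edge-connected piece $B$ by the $2$-colouring of Czap et al., which makes $B$ conflict-free connected, arranged so that paths entering and leaving through cut-vertices keep a unique colour. This is exactly the argument of \cite{Chang2018}, namely that $C(G)$ a linear forest implies $cfc(G)\le2$. Non-completeness gives $cfc(G)\ge 2$.

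\emph{Role of $\sigma_4(G)\ge \frac{4n-19}{5}$.} If one insists on an argument independent of \cite{Chang2018}, the degree condition is used to control the number of leaf pieces. Choose one vertex of minimum degree inside each of four distinct end pieces $B_1,\dots,B_4$, avoiding the attachment vertices. These four vertices are independent, so each $|B_i|\ge d(v_i)+1$, giving $n\ge \sum_i |B_i|\ge \sigma_4(G)+4$. This contradicts $\sigma_4(G)\ge\frac{4n-19}{5}$ once $n\ge 21$, since then $\frac{4n-19}{5}>n-4$ fails only for small $n$. I expect the exact inequality bookkeeping to need adjustment: the bound $\frac{4n-19}{5}$ is likely calibrated to five pieces rather than four. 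I would therefore run the same count with five pieces, using $\sigma_4$ on four of them and $\delta\ge3$ on the fifth, to conclude that $G$ has at most four pendant pieces.

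\emph{Main obstacle.} The main obstacle is showing that the alternating colouring works across cut-vertices shared by several pieces when $C(G)$ has few components. I would handle this by the standard $2$-colouring of the block tree from \cite{Chang2018}.
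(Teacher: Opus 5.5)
Your plan rests on a false reduction. A connected non-complete graph whose cut-edges form a linear forest need not have $cfc(G)=2$, even when $\delta(G)\ge 3$, and the alternating colouring in your \emph{Colouring} step fails.

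Suppose a component of $C(G)$ is a path $w_1w_2w_3w_4w_5$. Each of its edges is a cut-edge and no vertex can be revisited, so the only $w_i$--$w_j$ path in $G$ is the subpath $w_i\cdots w_j$. The four edges must therefore make $P_5$ conflict-free connected by themselves. By Theorem~\ref{cfc=cut-edges}, $cfc(G)\ge cfc(P_5)=3$; concretely, your colouring $1,2,1,2$ leaves the $w_1$--$w_5$ path with no unique colour. The same happens for two $P_3$ components in a chain: each $P_3$ must use both colours, so a path through both sees each colour at least twice.

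This is exactly the paper's sharpness example: five copies of $K_p$ joined by four bridges, with $C(G)\cong P_5$ or $2P_3$, $\delta\ge 3$ and $\sigma_4(G)=\frac{4n-20}{5}$, yet $cfc(G)\ge 3$. Chang et al.\ (Theorem~\ref{cfc=2-chang2018}) give $cfc(G)=2$ only when $2=n_1=\cdots=n_{p-1}\le n_p\le 4$. So $\sigma_4(G)\ge\frac{4n-19}{5}$ is not there to remove degenerate cases. It carries the whole content of the theorem: it must force $C(G)$ into this restricted shape.

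Your counting does not supply that step. Four end pieces give only $\sigma_4(G)\le n-4$, which is compatible with $\sigma_4(G)\ge\frac{4n-19}{5}$ for every $n\ge 1$; the inequality $\frac{4n-19}{5}>n-4$ holds only for $n<1$. Your fix (four pieces via $\sigma_4$, a fifth via $\delta\ge 3$) gives $n\ge\frac{4n-19}{5}+8$, i.e.\ $n\ge 21$, again no contradiction. Your target of at most four pendant pieces is also the wrong one: the sharpness example has only two, so internal components must be used.

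What works is the symmetric count in the paper:
\begin{itemize}
\item For $n\ge 21$ one has $\frac{4n-19}{5}\ge\frac{n+5}{2}$, so Theorem~\ref{thm0.1} with $k=6$ gives at most six cut-edges.
\item If there are at least four, take five components of $G-C(G)$, each containing a vertex $v_j$ with $N(v_j)\subseteq V(G_j)$. Summing over all five $4$-subsets gives $4n\ge 5\sigma_4(G)+20\ge 4n+1$, a contradiction.
\item With at most three cut-edges, every linear forest satisfies Theorem~\ref{cfc=2-chang2018}.
\end{itemize}

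One caution if you follow this route. Pendant components contain such a vertex because $\delta\ge 3$, but an internal one may not. For example, take a triangle $xyz$ with $x$ bridged to two copies of $K_5$ and $y,z$ each bridged to one. Then $n=23$, $\sigma_4=15\ge\frac{73}{5}$, and there are four cut-edges. The paper's proof passes over this. In such configurations you must check directly that $C(G)$, here $P_3\cup 2P_2$, still satisfies the hypothesis of Theorem~\ref{cfc=2-chang2018}.
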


This paper is organized as follows: In Section 2, we prove a result on the degree sum conditions and the number of cut-edges in a connected graph. In Section 3, we focus on studying the degree sum conditions for a connected graph to have bounded conflict-free connection number. In particular, we prove Theorems \ref{main1}-\ref{main3} and show their sharpness. In the last section, we introduce some discussions regarding our results and related topic.
\section{The minimum degree sum and the number of cut-edges}
In this section, we first recall the condition on the minimum degree sum of any $3$ independent vertices for a graph to have few cut-edges.
\begin{theorem}[Doan et al.{\cite{Doan2024}}]
	\label{thm_l=3}
	Consider two integers $n$, $k$, where $k\geq3$. Let $G$ be a connected graph of order $n$. If 
	$$
	\sigma_3(G)\geq
	\begin{cases}
		n-1 &\text{for } k = 3 \text{ and } n \geq 8 \\
		\displaystyle\frac{n+3}{2} &\text{for } k = 4 \text{ and } n \geq 13 \\
		\displaystyle\frac{3n-3k-5}{k+2} &\text{for } k\geq 5 \text{ and }
		\begin{cases}
			n \geq 125 \text{ for } k=5\\
			 n \geq 102 \text{ for } k=6\\
			n\geq k^2+6k+13 \text { for } k\geq 7
		\end{cases}
	\end{cases}
	$$
	then $G$ has at most $k$ cut-edges.
\end{theorem}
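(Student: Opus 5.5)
The proof goes by contradiction through the bridge tree. Suppose $G$ has at least $k+1$ cut-edges. Contract every $2$-edge-connected component (block of the bridge decomposition) of $G$ to a single vertex. This gives a tree $T$ with at least $k+2$ vertices, whose edges are exactly the cut-edges of $G$.

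For each component $B$ that is a leaf of $T$, we pick a vertex $x_B\in B$ that is not the endpoint of the unique cut-edge leaving $B$. If $|B|=1$ we take the vertex itself. Two easy facts drive the argument:
\begin{itemize}
\item if $|B|\ge 2$, then $d_G(x_B)\le |B|-1$, since all neighbours of $x_B$ lie in $B$;
\item if $|B|=1$, then $d_G(x_B)=1$.
\end{itemize}
In both cases $d_G(x_B)\le |B|-1$, or $d_G(x_B)=1$ when $|B|=1$. Moreover, a $2$-edge-connected component with at least two vertices has at least $3$ vertices. Vertices chosen in distinct leaf components are non-adjacent, since any edge between them would be a cut-edge joining two leaves of $T$. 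So whenever $T$ has at least three leaves, these vertices form an independent set.

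We then split according to the number of leaves $\ell$ of $T$.

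\emph{Case $\ell\ge 3$.} Take three leaf components $B_1,B_2,B_3$ of smallest total order. The components of $G$ other than the $B_i$ still contain at least $k+2-\ell$ further vertices of $T$, and at least one vertex per component. Also, every internal vertex of $T$ of degree $2$ whose component is a single vertex forces that vertex to have degree $2$. Adding up gives
$$\sum_{i=1}^{3} d(x_{B_i})\le \sum_i (|B_i|-1)\le n-(\text{number of other components})-3,$$
and then a careful count of the other components is needed.

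\emph{Case $\ell=2$.} Here $T$ is a path with at least $k+2$ vertices. We take the two end vertices together with a vertex in an interior component that is non-adjacent to both, chosen at distance at least $2$ from the ends. Interior trivial components contribute degree $2$; nontrivial ones let us choose a vertex of degree at most its order minus $1$ plus the cut-edges, and the cut-edges incident to it can be avoided when $|B|\ge3$.

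In each case we bound the degree sum of the chosen $3$ independent vertices from above by roughly $n-|\text{rest}|$, averaging over the choices. With $k+1$ cut-edges, the "rest" contains at least about $k-1$ components. The standard averaging trick is this: if there are $m$ candidate leaf or low-degree components, then the three lightest ones have total order at most $3n/m$. This produces the bound $\sigma_3(G)\le \frac{3n-3k-5}{k+2}-\varepsilon$ in the generic case, contradicting the hypothesis. The threshold denominator $k+2$ matches the $k+2$ components of $T$, so the plan is to take the three lightest among the $k+2$ components, each contributing at most $|B|-1$ plus $O(1)$ from its cut-edges. That gives
$$\sigma_3\le \frac{3(n-\text{const})}{k+2}-3,$$
with the constants tuned to reach $3n-3k-5$.

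The small cases $k=3,4$ need the sharper thresholds $n-1$ and $\frac{n+3}{2}$. These will be handled by direct structural analysis: the possible trees $T$ with $5$ or $6$ vertices (star, path, spider, and so on), and for each, explicit vertices of low degree.

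The main obstacle is the averaging step. Components that are single vertices of degree $2$ or $3$ in $T$ have degree larger than $|B|-1$. Components of order $1$ or $2$ cannot be $2$-edge-connected unless trivial. Picking three pairwise non-adjacent vertices among consecutive trivial components on a path requires spacing. I would first try to handle this by a weighting: assign each component $B$ the value $|B|+c$ (with $c\approx 1$), show that the chosen vertex has degree at most $|B|+c-2$, and sum to $n+c(k+2)$. Then I would pick three pairwise non-adjacent components, spaced apart along $T$, whose weight is at most $3/(k+2)$ of the total. The lower bounds on $n$ (such as $n\ge k^2+6k+13$) are needed exactly to absorb the rounding and spacing losses in this selection.
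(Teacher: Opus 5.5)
Your proposal is a plan rather than a proof. The decisive count is deferred (``a careful count of the other components is needed'', ``constants tuned''), and the one step you make precise does not survive. The leaf-based selection in your cases $\ell\ge3$ and $\ell=2$ is a detour, because leaf components can be the largest pieces of $G$. For example, let $T$ be a path whose two end components are cliques of order about $n/4$ and whose $k$ internal components are cliques of order about $n/(2k)$. Every triple you allow then has degree sum at least about $n/2$, which is above $\frac{3n-3k-5}{k+2}$ for $k\ge5$, while three internal cliques give about $\frac{3n}{2k}$.

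Your final idea, averaging over all $k+2$ components, is the right one. This paper only cites the statement from Doan et al., but it proves the $\sigma_4$ analogue (Theorem~\ref{thm0.1}) by exactly this route, and the same route works here:
\begin{itemize}
\item Fix a set $F$ of exactly $k+1$ cut-edges.
\item In every component $G_j$ of $G-F$ that admits one, take a vertex $v_j$ with $N(v_j)\subseteq V(G_j)$. These vertices are pairwise non-adjacent wherever their components sit in the tree, so no spacing is needed, and $|V(G_j)|\ge d_G(v_j)+1$.
\item If every component has such a vertex, summing over all triples gives $3n\ge(k+2)(\sigma_3(G)+3)\ge 3n+1$, a contradiction.
\end{itemize}
This is your weighting with $c=1$, and nothing weaker works. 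The threshold exceeds $\sigma_3$ of the star of $k+2$ cliques by only $\frac{1}{k+2}$, so your weighting yields a contradiction only if $c<1+\frac{1}{3(k+2)}$.

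The genuine gap is the other kind of component, in which every vertex is an endpoint of an edge of $F$. Examples are trivial internal components of the tree, or a triangle each of whose vertices carries a cut-edge. Your weighting cannot absorb them: a trivial component with $d_T\ge1$ would need $c\ge d_T+1\ge 2$. These components need a separate argument, and this, rather than ``rounding and spacing losses'', is where the lower bound on $n$ enters. Such a component $G_1$ has $a\le k+1$ vertices. It contains a vertex $v_1$ with $d_G(v_1)\le\frac{k+1}{a}+a-1\le k+1$, and $v_1$ is non-adjacent to all the $v_j$ above. Suppose $G_1$ is the only such component. Then any two other components satisfy $|V(G_x)|+|V(G_y)|\ge\sigma_3(G)-k+1$, hence $2(n-1)\ge(k+1)(\sigma_3(G)-k+1)$. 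Combined with $\sigma_3(G)\ge\frac{3n-3k-5}{k+2}$ this gives $(k-1)n\le k^3+5k^2+5k-1$, i.e.\ $n\le k^2+6k+11+\frac{10}{k-1}$, which contradicts $n\ge k^2+6k+13$ for $k\ge7$.

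Three things remain undone in your proposal:
\begin{itemize}
\item You must still handle two or more such components and the ways they can be linked. This is the only place where independence is not automatic and your spacing concern becomes relevant.
\item You must actually carry out the $k=3,4$ analysis you only announce.
\item Your displayed bound $\sum_i d_G(x_{B_i})\le\sum_i(|B_i|-1)$ is false when a leaf component is trivial, since then $d_G(x_{B_i})=1$.
\end{itemize}
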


The main goal of this section is to prove the following.
\begin{theorem} \label{thm0.1} 
Consider two integers $n, k$, where $k\ge3$. Let $G$ be a connected graph of order $n$. If
\[
\sigma_{4}(G) \ge
\begin{cases} 
2n-7 &\text{for } k=3 \text{ and } n\ge 8 \\ 
n &\text{for } k =4 \text{ and } n\ge9\\
n-1 &\text{for } k=5 \text{ and } n \ge 11 \\
\displaystyle\frac{n+5}{2} &\text{for } k=6 \text{ and } n \ge 17 \\
\displaystyle\frac{4n-4k-7}{k+2} & \text{for } k \ge 7 \text{ and }\begin{cases} 
n \ge 286 &\text{for } k=7 \\
n \ge 207 &\text{for } k=8 \\
n \ge 190 &\text{for } k=9 \\
\displaystyle n > k^2 + 7k + 18 + \frac{33}{k-2}  &\text{for } k\ge 10 \\
\end{cases}
\end{cases}
\]
then $G$ has at most $k$ cut-edges.
\end{theorem}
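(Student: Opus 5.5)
We argue by contradiction: suppose $G$ satisfies the degree-sum hypothesis for the given $k$ but has at least $k+1$ cut-edges. We then build from the cut-edge structure a family of pairwise disjoint, mutually far-apart vertex sets, take four pairwise non-adjacent vertices of small degree from them, and bound their degree sum from above by a quantity smaller than the assumed lower bound on $\sigma_4(G)$.

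\textbf{Step 1 (reduction to a tree of pieces).} Delete all cut-edges of $G$. The components $D_1,\dots,D_t$ of $G-E(C(G))$ are bridgeless, and contracting each $D_i$ to a point turns $G$ into a tree $T$ with $t$ vertices and $t-1\ge k+1$ edges. Hence $t\ge k+2$. Each leaf of $T$ is a piece $D_i$ joined to the rest of $G$ by exactly one cut-edge. Let the end of that cut-edge in $D_i$ be $v_i$.
\begin{itemize}
\item If $D_i$ is a single vertex, then $v_i$ is a pendant vertex of degree $1$.
\item Otherwise $D_i$ is $2$-edge-connected, so $|D_i|\ge 3$. It then contains a vertex $x_i\ne v_i$ all of whose neighbours lie in $D_i$, so $d(x_i)\le |D_i|-1$.
\end{itemize}

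\textbf{Step 2 (choice of four vertices).} Choose $x_i$ in this way in four distinct leaf pieces, or more generally in pieces lying in four distinct "ends" of $T$. Such vertices are pairwise non-adjacent, so they form an independent set. If $T$ has at least four leaves, this gives
\[\sigma_4(G)\le \sum_{i=1}^4 (|D_i|-1).\]
The vertices of the four pieces, together with the inner vertices of $G$ needed to create $k+1$ cut-edges, must fit into $n$ vertices. This is where the count has to be sharpened. Each internal piece of $T$ of degree $2$ is either a single vertex or has at least $3$ vertices, and the $t-1\ge k+1$ cut-edges force many internal vertices. Pigeonhole then shows that some four leaf pieces are small. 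Concretely, we will aim to choose four leaf pieces whose total size is at most about $\frac{4(n-c)}{k+2}$ when the cut-edges are spread out. This gives the general bound $\frac{4n-4k-7}{k+2}$; the additive constants come from the case analysis of singleton versus large pieces.

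\textbf{Step 3 (few leaves).} If $T$ has at most three leaves, then $T$ is a path or a subdivided claw. We then use internal pieces: in a long path of pieces of degree $2$, a piece $D_j$ of size at least $3$ still has a vertex $x_j$ off both attachment points with $d(x_j)\le |D_j|-1$, and a singleton piece has degree $2$. Choose four pieces pairwise at distance at least $2$ in $T$. This requires $t$ large, which holds because $t\ge k+2\ge 5$, with small cases handled directly. Their minimum degree sum is again bounded by the averaging above.

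\textbf{Small $k$ and the main obstacle.} For $k=3,4,5,6$ the specific bounds $2n-7$, $n$, $n-1$, $\frac{n+5}{2}$ come from optimizing the extremal configurations (for $k=3$, e.g., four leaves with one huge piece) rather than from the averaging. We handle these by the same framework with a finer case split on the number of leaves of $T$. Where we can, we may reduce to Theorem~\ref{thm_l=3}: if three of the four chosen vertices give a small sum, this contradicts a $\sigma_3$ bound. The main obstacle is the extremal optimization. We must show that, over all shapes of $T$ and all size distributions of the pieces with $t-1\ge k+1$, the minimum over admissible independent 4-sets of the degree sum is strictly below the threshold. This is delicate exactly when several pieces are singletons: internal singletons have degree $2$ and are cheap, while large leaf pieces are expensive. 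Our first attempt will be to always prefer low-degree singleton vertices, and otherwise to pick the four smallest far-apart pieces. The lower bounds on $n$ (such as $n\ge 286$ for $k=7$) will then absorb the integrality and boundary losses in the averaging inequality.
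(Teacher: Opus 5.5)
Your proposal does not yet contain a proof. The decisive inequality is only announced (``Pigeonhole then shows'', ``we will aim to''), and you yourself flag it as the main obstacle. That inequality is: for every shape of $T$ and every distribution of piece sizes, some independent $4$-set has degree sum below the threshold. The scheme you sketch also cannot deliver it:
\begin{itemize}
\item Averaging over leaf pieces cannot give the denominator $k+2$, since $T$ may be a path with only two leaves.
\item Choosing one vertex per piece, from pieces pairwise at distance at least $2$ in $T$, is too restrictive. A path on at most $6$ vertices has no four such pieces, so $t\ge k+2\ge 5$ does not suffice.
\item The same selection is also insufficient. In the paper's extremal graph $H_1$, every independent $4$-set uses two vertices of the same piece.
\end{itemize}

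The missing idea, which drives the paper's proof, is to fix a set $B$ of exactly $k+1$ cut-edges, so that $G-B$ has exactly $k+2$ components. Call a vertex \emph{interior} if all its neighbours lie in its own component. Interior vertices of distinct components are automatically non-adjacent, since an edge between them would lie in $B$; so no distance condition in $T$ is needed. An interior vertex $v_j$ of $G_j$ also gives $|V(G_j)|\ge d_G(v_j)+1$. If every component has an interior vertex, averaging over all $4$-subsets of components gives $n\ge\frac{k+2}{4}(\sigma_4(G)+4)$. That is, $\sigma_4(G)\le\frac{4n-4k-8}{k+2}$, which is below every threshold in the statement.

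The real work concerns components with no interior vertex (the paper's set $S$). All their vertices are incident to edges of $B$, so their degrees are controlled by $|B|=k+1$. The paper runs a case analysis on $|S|$, supported by three claims: four vertices from distinct members of $S$ are never independent; each member of $S$ has at most three cut-edges to other members; and $|S|\le k+1$. Your singleton-versus-large dichotomy does not see these components. For example, a triangle each of whose vertices carries a pendant edge is a large piece with no interior vertex. The fallback to Theorem~\ref{thm_l=3} is also unavailable: only $\sigma_4(G)$ is bounded, and that yields no useful bound on $\sigma_3(G)$.

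For $k=3$ there is a more basic obstruction, which the paper's proof shares. Any argument of this type presupposes that four independent vertices exist. Under the paper's convention, however, $\sigma_4(G)=+\infty$ when $\alpha(G)\le 3$. Restore the deleted edge $uv$ in $H_1$: that is, join a vertex of $K_{n-4}$ ($n\ge 8$) to the end $x_4$ of a path $x_4x_3x_2x_1$. This graph has four cut-edges. Yet every independent set contains at most one clique vertex and at most two of $x_1,\dots,x_4$, so $\alpha(G)=3$ and $\sigma_4(G)\ge 2n-7$ holds vacuously.

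Thus the $k=3$ case is false as stated, and it cannot be proved without an extra hypothesis such as $\alpha(G)\ge 4$. The paper's argument fails at exactly this point, in Case 5.1.2 for $k=3$. There it assumes that the component outside $S$ contains two non-adjacent vertices.
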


Before proceeding to the proof, we would like to state a relationship between the results of Theorem \ref{thm0.1} and Theorem \ref{thm_l=3} by the following remark. 
\begin{remark}{}{}  
\upshape 
    Note that:
    \begin{itemize}
        \item For $k=3$, we have $\displaystyle 2n-7 > \frac{4n-4k-7}{k+2}$;
        \item For $k=4$, we have $\displaystyle n > \frac{4n-4k-7}{k+2}$;
        \item For $k=5$, we have $\displaystyle n-1 > \frac{4n-4k-7}{k+2}$;
        \item For $k=6$, we have $\displaystyle \frac{n+5}{2} > \frac{4n-4k-7}{k+2}$.
    \end{itemize}

    Moreover, if the lower bound of the minimum degree sum of any three independent vertices is $\sigma_{3}(G)  \ge\frac{3n-3k-5}{k+2}$, then the lower bound of the minimum degree sum of any four arbitrary independent vertices will be:
    \[
    \sigma_{4}(G) \ge \frac{4}{3}\sigma_{3}(G) \ge \frac{4}{3}\left(\frac{3n-3k-5}{k+2}\right) > \frac{4n-4k-7}{k+2}.
    \]
\end{remark}

Now, we prove Theorem \ref{thm0.1}.

\begin{proof}
Suppose for the sake of contradiction that graph $G$ has at least $k+1$ cut-edges. Let $B$ be the set of $k+1$ cut-edges of $G$. Then $G - B$ has exactly $k+2$ components, say $G_{1},G_2,\ldots,G_{k+2}.$ 

Let $S = \{G_\alpha \in G - B \mid N(v_\alpha) \not \subseteq V(G_\alpha) \text{ for all } v_\alpha \in V(G_\alpha)\}$. We consider the following cases based on the cardinality of the set $S$.

\noindent\textbf{Case 1.} $\lvert S \rvert=0$. 

In this case, for every index $j$ in $[k+2]$, there exists a vertex $v_{j}\in V(G_{j})$ such that $N(v_{j})\subseteq V(G_{j})$. Clearly, $\lvert V(G_{j}) \rvert \ge d_{G}(v_{j})+1$ for all $j \in [k+2]$. If $G_{x}$, $G_{y}$, $G_{z}$, $G_t$ are four distinct components of $G - B$, then for $v_{i}\in V(G_{i}), i\in\{x,y,z,t\}$ such that $N(v_i) \subseteq V(G_i)$, we get that $\{v_{x},v_{y},v_{z}, v_t\}$ is an independent set. Therefore
\[
\sum_{i\in\{x,y,z,t\}}\lvert V(G_{i})\rvert\ge\sum_{i\in\{x,y,z,t\}}d_{G}(v_{i})+4 \ge \sigma_{4}(G)+4.
\]
From here, we have an estimation:
$$4n=4\sum_{j=1}^{k+2}\lvert V(G_{j}) \rvert \ge (k+2)\left(\frac{4n-4k-7}{k+2}+4\right)=4n+1.$$
Obviously, this is a contradiction for all $n$.

For $\lvert S \rvert = m\geq 1,$ without loss of generality, we may assume that $S=\{G_1,G_2\ldots, G_m\}.$

\noindent\textbf{Case 2.} $m=1$. 

Then we obtain that $N(v)\not\subseteq V(G_{1})$ for all vertices $v\in V(G_{1})$. Let $a=\lvert V(G_{1}) \rvert\ge1$. Since each vertex in $V(G_1)$ is adjacent to at least one cut-edge from the set $B$, it follows that $a\le \lvert B \rvert = k+1$. There always exists a vertex $v_{1}\in V(G_{1})$ such that 
\[
d_{G}(v_{1})\le \frac{\lvert B \rvert}{a}+(a-1)=\frac{k+1}{a}+a-1.
\]
Since $1\le a\le k+1$, we deduce $d_{G}(v_{1})\le k+1$. For any three distinct components $G_{x}$, $G_{y}$, $G_z$ ($x,y,z \in [k+2]\setminus [1]$), there exist three vertices $v_{x}\in V(G_{x})$, $v_{y}\in V(G_{y})$, $v_{z}\in V(G_{z})$ such that $N(v_{x})\subseteq V(G_{x})$, $N(v_{y})\subseteq V(G_{y})$ and $N(v_{z})\subseteq V(G_{z})$. Moreover, it can be witnessed that $\{v_{1},v_{x},v_{y}, v_z\}$ forms an independent set, so $\displaystyle \sum_{i \in \{x,y,z\}} d_G(v_i) \ge\sigma_{4}(G)-(k+1)$. This implies 
\begin{align*}
\sum_{i \in \{x,y,z\}} \lvert V(G_{i}) \rvert \ge \sum_{i \in \{x,y,z\}} d_G(v_i) + 3 \ge \sigma_{4}(G)-(k+1)+3 = \sigma_4(G) - k+2.
\end{align*}
Now, we have
\begin{align*}
3n &= 3\sum_{j=1}^{k+2}\lvert V(G_{j})\rvert \ge 3\left(\sum_{j=2}^{k+2}\lvert V(G_{j})\rvert\right)+3\ge(k+1)(\sigma_{4}(G)-k+2)+3.
\end{align*}
For $k=3,k=4,k=5,k=6, k=7,k=8,k=9$, we obtain 
\[
n \le \frac{29}{5}, n\le \frac{7}{2}, n \le 7, n\le 15, n \le \frac{613}{5}, n\le \frac{287}{2}, n \le \frac{1167}{7},
\]
respectively, a contradiction. 
For $k \ge 10$, 
\[
n \le k^2+7k+18+\frac{33}{k-2}.
\]
This is also a contradiction. 

\noindent\textbf{Case 3. }$m=2$. 

Now, we consider two subcases as follows:

\textbf{Case 3.1. }If there exist two vertices $u \in V(G_1)$ and $v \in V(G_2)$ such that $uv$ is not an edge of the graph $G$, then $d_G(u) + d_G(v) \le \lvert B \rvert + 1= k+2$ since every vertex in $V(G_1),V(G_2)$ is incident to at least one cut-edge in $B$. For any components $G_t,G_s$ ($t,s \in [k+2]\setminus [2]$), there exist two vertices $v_t \in V(G_t), v_s \in V(G_s)$ such that $N(v_t) \subseteq V(G_t), N(v_s) \subseteq V(G_s)$. Obviously, $\{u,v,v_t,v_s\}$ is an independent set. Therefore $d_G(u) + d_G(v) + d_G(v_t) + d_G(v_s) \ge \sigma_4(G)$. Hence, 
\begin{align*}
    \lvert V(G_t) \rvert + \lvert V(G_s) \rvert &\ge (d_G(v_t) + 1) + (d_G(v_s) + 1) \\ 
    &\ge \sigma_4(G) - d_G(u) -d_G(v) +2 \ge \sigma_4(G) - (k+2) +2 =\sigma_4(G)-k.
\end{align*}
From here, we have an estimation
\begin{align*}
2n &= 2\sum_{j=1}^{k+2}\lvert V(G_j)\rvert = 2(\lvert V(G_1)\rvert + \lvert V(G_2)\rvert) + 2\sum_{j=3}^{k+2} \lvert V(G_j)\rvert \ge 4 + k(\sigma_4(G)-k). 
\end{align*}
For $k=3,k=4,k=5,k=6, k=7,k=8,k=9$, we have 
\[
n \le \frac{13}{2}, n\le 6, n \le \frac{26}{3},n\le 17, n\le 65, n \le 76, n\le \frac{617}{7},
\]
respectively, a contradiction. 
For $k \ge 10$, 
\[
n \le \frac{1}{2}k^2+4k+\frac{19}{2}+\frac{15}{k-2} < k^2+7k+18+\frac{33}{k-2}.
\]
This is also a contradiction. 

\textbf{Case 3.2. }If $uv$ is an edge of the graph $G$ for all $u \in V(G_1)$ and $v \in V(G_2)$. Then, since there is at most one cut-edge connecting $G_1$ with $G_2$, we must have $\lvert V(G_1) \rvert = \lvert V(G_2) \rvert = 1$. Assume $V(G_1) = \{u\}, V(G_2) = \{v\}$. Thus, we obtain $d_G(u) + d_G(v) \le k+2$. Without loss of generality, suppose that $d_G(u) \le \frac{k+2}{2}$. For each triple of components in $G - B$, say $G_x, G_y, G_z$ with $x,y,z \in [k+2]\setminus[2]$, there exist three vertices $v_x \in V(G_x), v_y \in V(G_y), v_z \in V(G_z)$ such that $N(v_x) \subseteq V(G_x), N(v_y) \subseteq V(G_y), \text{ and } N(v_z) \subseteq V(G_z)$. It is easy to see that $\{u, v_x, v_y, v_z\}$ is an independent set, so $d_G(u) + d_G(v_x) + d_G(v_y) + d_G(v_z) \ge \sigma_4(G)$. Therefore
\[
 \sum_{i \in \{x,y,z\}} \lvert V(G_i) \rvert \ge \sum_{i \in \{x,y,z\}} d_G(v_i) + 3 \ge \sigma_4(G) - d_G(u) + 3 \ge \sigma_4(G) - \frac{k+2}{2} + 3.
\]
Thus, we have 
\begin{align*}
3n =& 3\sum_{j=1}^{k+2} \lvert V(G_j)\rvert = 3\sum_{j=3}^{k+2} \lvert V(G_j)\rvert + 3(\lvert V(G_1)\rvert+\lvert V(G_2)\rvert) \ge k\left(\sigma_4(G) - \frac{k+2}{2} + 3\right)+6.
\end{align*}
For $k=3,k=4,k=5,k=6, k=7,k=8,k=9$, we obtain 
\[
n \le \frac{9}{2}, n\le -6, n \le \frac{3}{4}, 3n+15\le 3n, n \le \frac{571}{2}, n\le 206, n \le \frac{379}{2},
\]
respectively, a contradiction. For $k \ge 10$, 
\[
n \le \frac{1}{2}k^2+6k+33+\frac{186}{k-6} < k^2+7k+18+\frac{33}{k-2}.
\]
This is also a contradiction. 

\noindent\textbf{Case 4. }$m=3$. 

\textbf{Case 4.1. }There exist three vertices $v_1 \in V(G_1), v_2 \in V(G_2), v_3 \in V(G_3)$ such that $\{v_1, v_2, v_3\}$ forms an independent set. Note that, there are no cut-edges joining all three pairs of these vertices since otherwise a cycle would be formed, so there are at most two cut-edges joining these three vertices. Therefore we have $\displaystyle\sum_{i \in [3]} d_G(v_i) \le \lvert B \rvert + 2 = k+3$. For any component $G_t$ ($t \ge 4$) from $k-1$ remaining components, we can always choose a vertex $v_t \in V(G_t)$ such that $N(v_t) \subseteq V(G_t)$. Clearly, $\{v_1, v_2, v_3, v_t\}$ is an independent set, so $\displaystyle \sum_{i \in [3]} d_G(v_i) + d_G(v_t) \ge \sigma_4(G)$. Therefore, 
\[
    \lvert V(G_t) \rvert \ge d_G(v_t) + 1 \ge \sigma_4(G) - \sum_{i=1}^3 d_G(v_i) + 1 \ge \sigma_4(G) - (k+3)+1 = \sigma_4(G) - k - 2.
\]
Thus, the number of vertices in graph $G$ is
\begin{align*}
n &= \sum_{j=1}^{k+2} \lvert V(G_j)\rvert = \sum_{j=1}^3 \lvert V(G_j)\rvert + \sum_{j=4}^{k+2} \lvert V(G_j)\rvert\ge 3 + (k-1)(\sigma_4(G) - k - 2).
\end{align*}
For $k=3, k=4, k=5, k=6, k=7,k=8, k=9$, we obtain 
\[
n \le 7, n\le \frac{15}{2}, n\le \frac{29}{3}, n \le \frac{49}{3}, n \le \frac{223}{5}, n \le \frac{943}{18}, n \le \frac{1279}{21},
\]
respectively, a contradiction. For $k \ge 10$, we have 
\[
n \le \frac{1}{3}k^2+3k+6+\frac{19}{3(k-2)} < k^2+7k+18+\frac{33}{k-2}.
\]
This is also a contradiction. 

\textbf{Case 4.2. }For any choice of three vertices $v_1 \in V(G_1), v_2 \in V(G_2), v_3 \in V(G_3)$, $\{v_1, v_2, v_3\}$ is not an independent set. Nevertheless, we can always choose two non-adjacent vertices, say $v_1, v_3$, in $S$ such that $d_G(v_1)+d_G(v_3) \le \lvert B \rvert=k+1$. For two vertices $v_t \in V(G_t), v_s \in V(G_s)$ such that $N(v_t) \subseteq V(G_t), N(v_s) \subseteq V(G_s)$ with $t,s \in [k+2]\setminus[3]$, we have $\{v_1, v_3, v_t, v_s\}$ forms an independent set. Therefore $d_G(v_1) + d_G(v_3) + d_G(v_t) + d_G(v_s) \ge \sigma_4(G)$, which leads to 
\begin{align*}
    \lvert V(G_t) \rvert+\lvert V(G_s) \rvert &\ge (d_G(v_t)+1)+(d_G(v_s)+1) \ge \sigma_4(G) - d_G(v_1) - d_G(v_3) + 2 \\ 
    &\ge \sigma_4(G)-(k+1)+2=\sigma_4(G)-k+1.
\end{align*}
Thus, we have 
\begin{align*}
2n = &2\sum_{j=1}^{k+2} \lvert V(G_j)\rvert =2\left(\sum_{j=1}^3 \lvert V(G_j)\rvert + \sum_{j=4}^{k+2} \lvert V(G_j)\rvert\right) \ge 6 + (k-1)(\sigma_4(G)-k+1).
\end{align*}
For $k=3, k=4, k=5, k=6, k=7, k=8, k=9$, we obtain
\[
n \le 6, n \le 3, n\le 7, n\le 13, n \le 80, n\le \frac{703}{8}, n\le \frac{491}{5},
\]
respectively, a contradiction. For $k \ge 10$, we have 
\[
n \le \frac{1}{2}k^2+4k+13+\frac{87}{2(k-4)} < k^2+7k+18+\frac{33}{k-2}.
\]
This is also a contradiction.

\noindent\textbf{Case 5. }$m \ge 4$. 

To prove this case, we first state the following claims.

\begin{claim}\label{claim1}
    If $G_x, G_y, G_z, G_t$ are four distinct components in $S$, then $\{v_x,v_y, v_z, v_t\}$ is not an independent set for all $v_x \in G_x,v_y \in G_y, v_z \in G_z, v_t \in G_t$, and consequently there are at most three edges among these four vertices. 
\end{claim}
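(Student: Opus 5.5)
The plan is to argue by contradiction. Suppose that for some choice $v_x\in V(G_x)$, $v_y\in V(G_y)$, $v_z\in V(G_z)$, $v_t\in V(G_t)$ the set $\{v_x,v_y,v_z,v_t\}$ is independent. Then $d_G(v_x)+d_G(v_y)+d_G(v_z)+d_G(v_t)\ge \sigma_4(G)$. I will bound this degree sum from above by $k+4$ for \emph{every} such choice, and then check that $\sigma_4(G)>k+4$ under each hypothesis of Theorem \ref{thm0.1}.

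\textbf{Structure of $B$-edges.} The components $G_1,\dots,G_{k+2}$ are the components of $G-B$. Hence every edge of $G$ joining two different components lies in $B$, and every edge of $B$ has exactly one endpoint in each of two different components. For a component $G_\alpha$, write $a_\alpha=\lvert V(G_\alpha)\rvert$ and let $b_\alpha$ be the number of edges of $B$ incident with $G_\alpha$. If $G_\alpha\in S$, every vertex of $G_\alpha$ is incident with at least one $B$-edge, and these edges are distinct for distinct vertices. So for any $v\in V(G_\alpha)$ the number of $B$-edges at $v$ is at most $b_\alpha-(a_\alpha-1)$. This gives
\[
d_G(v)\le (a_\alpha-1)+b_\alpha-(a_\alpha-1)=b_\alpha .
\]
Thus each vertex of a component in $S$ has degree at most $b_\alpha$, regardless of which vertex is chosen.

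\textbf{Counting $B$-edges.} Summing over the four components gives $\sum_{\alpha\in\{x,y,z,t\}} b_\alpha = \lvert B'\rvert + e$. Here $B'\subseteq B$ is the set of $B$-edges touching at least one of $G_x,G_y,G_z,G_t$, and $e$ is the number of $B$-edges with both ends in these four components, which are counted twice. Since $B$ consists of cut-edges, $B$ is acyclic. Contracting each component leaves the $B$-edges among these four components as a forest on four nodes, so $e\le 3$. Therefore the degree sum is at most $(k+1)+3=k+4$. The same observation gives the ``consequently'' part of the claim: any edge among $v_x,v_y,v_z,v_t$ joins distinct components, so it lies in $B$, and these edges form a forest on four vertices, so there are at most three of them.

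\textbf{Final verification.} The remaining step is to check $\sigma_4(G)>k+4$ in each range.
\begin{itemize}
\item For $k=3$: $2n-7\ge 9>7$.
\item For $k=4$: $n\ge 9>8$.
\item For $k=5$: $n-1\ge 10>9$.
\item For $k=6$: $(n+5)/2\ge 11>10$.
\item For $k\ge 7$: $\frac{4n-4k-7}{k+2}>k+4$ is equivalent to $n>\frac{k^2+10k+15}{4}$. This holds because the assumed lower bounds ($286,207,190$, and $k^2+7k+18$) are much larger.
\end{itemize}
In every case this contradicts the independence of $\{v_x,v_y,v_z,v_t\}$. I expect the main obstacle to be the second step, namely controlling the degree of an \emph{arbitrary} vertex rather than a minimum-degree one. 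The key point there is that distinct vertices of a component in $S$ use disjoint sets of $B$-edges.
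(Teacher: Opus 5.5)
Your proof is correct and follows the same route as the paper: assume $\{v_x,v_y,v_z,v_t\}$ is independent, bound its degree sum by $\lvert B\rvert+3=k+4$, and check that $\sigma_4(G)>k+4$ under each hypothesis of Theorem~\ref{thm0.1}, with the ``at most three edges'' part coming from the acyclicity of the cut-edges. Your per-vertex bound $d_G(v)\le b_\alpha$, which uses that distinct vertices of a component in $S$ are incident with disjoint sets of $B$-edges, together with the forest count $e\le 3$, justifies the $k+4$ bound more carefully than the paper's brief appeal to ``a similar argument as in Case 4.1''.
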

\begin{proof}
    Suppose to the contrary that $\{v_x,v_y, v_z, v_t\}$ is an independent set. Then 
    \[
    \sum_{i \in \{x,y,z,t\}} d_G(v_i) \ge \sigma_4(G).
    \]
    On the other hand, using a similar argument as in Case 4.1, we obtain that
    \[
    \sum_{i \in \{x,y,z,t\}} d_G(v_i) \le \lvert B \rvert+3 = k+4.
    \]
    Thus, $k+4 \ge \sigma_4(G)\ge \frac{4n-4k-7}{k+2}$. For $k=3,k=4,k=5,k=6,k=7,k=8,k=9$, we obtain
    \[
    n \le 7, n \le 8, n\le 10, n\le 15, n\le \frac{67}{2}, n \le \frac{159}{4}, n\le \frac{93}{2},
    \]
	a contradiction. For $k \ge 10$, we have 
    \[
    n \le \frac{1}{4}k^2+\frac{5}{2}k+\frac{15}{4} < k^2+7k+18+\frac{33}{k-2}.
    \]
    This is a contradiction. Moreover, since $v_x, v_y, v_z, v_t$ cannot form a cycle, there are at most three edges among them. This completes the proof of Claim \ref{claim1}.
\end{proof}
\begin{claim}\label{claim2}
    For every component $G_\alpha \in S$, there are at most three cut-edges connecting $G_\alpha$ with other components of $S$.
\end{claim}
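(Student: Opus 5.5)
The plan is to reduce Claim~\ref{claim2} to Claim~\ref{claim1} by looking at the tree structure formed by the components of $G-B$. First, contract each component $G_1,\ldots,G_{k+2}$ of $G-B$ to a single vertex and keep the edges of $B$. The result is a graph $T$ with $k+2$ vertices and $k+1$ edges. Every edge of $B$ is a cut-edge of $G$, so no cycle of $T$ can use an edge of $B$. Hence $T$ is acyclic, and since $G$ is connected, $T$ is a tree. Two facts follow immediately:
\begin{itemize}
\item two distinct components are joined by at most one edge of $B$;
\item any edge of $G$ between two different components is an edge of $B$, i.e.\ an edge of $T$.
\end{itemize}

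Now suppose, for a contradiction, that some $G_\alpha\in S$ has at least four cut-edges of $B$ joining it to other components of $S$. By the first fact these edges go to four distinct components $G_{\beta_1},G_{\beta_2},G_{\beta_3},G_{\beta_4}\in S$, and each $G_{\beta_i}$ is adjacent to $G_\alpha$ in $T$. Suppose some $G_{\beta_i}$ and $G_{\beta_j}$ with $i\neq j$ were joined by an edge of $G$. By the second fact this edge is in $B$, and then $G_\alpha G_{\beta_i}G_{\beta_j}$ would be a triangle in the tree $T$, which is impossible. Therefore no edge of $G$ joins two of the components $G_{\beta_1},\ldots,G_{\beta_4}$.

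Finally, choose any vertices $v_{\beta_i}\in V(G_{\beta_i})$ for $i\in[4]$. They lie in pairwise distinct components with no edges between those components, so $\{v_{\beta_1},v_{\beta_2},v_{\beta_3},v_{\beta_4}\}$ is an independent set of four vertices taken from four distinct components of $S$. This contradicts Claim~\ref{claim1}, so $G_\alpha$ has at most three such cut-edges.

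The only step needing care is that $T$ is a tree, which rules out both parallel edges of $B$ between two components and triangles among the neighbours of $G_\alpha$. Once that is in place, the claim is a direct application of Claim~\ref{claim1}.
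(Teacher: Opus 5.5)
Your proof is correct and follows essentially the same route as the paper. Both arguments show that the four components of $S$ attached to $G_\alpha$ are pairwise non-adjacent, since an edge between two of them would close a cycle through $G_\alpha$ containing a cut-edge, and then invoke Claim~\ref{claim1}; the paper builds this cycle explicitly through a path inside $G_\alpha$ and uses the endpoints of the cut-edges, whereas you phrase it as a triangle in the contracted tree $T$, which lets any choice of representatives work.
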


\begin{proof}
    Suppose to the contrary that there are more than three cut-edges connecting $G_\alpha$ with other components of $S$. Then there exist distinct components $G_x, G_y, G_z, G_t$ in $S$ and cut-edges $e_x, e_y, e_z, e_t$ connecting $G_\alpha$ to $G_x, G_y, G_z, G_t$, respectively.
    Assume $e_x = u_{\alpha x}v_x$, $e_y = u_{\alpha y}v_y$, $e_z = u_{\alpha z}v_z$, $e_t = u_{\alpha t}v_t$ with $u_{\alpha x}, u_{\alpha y}, u_{\alpha z}, u_{\alpha t} \in V(G_\alpha)$ and $v_x \in V(G_x), v_y \in V(G_y), v_z \in V(G_z), v_t \in V(G_t)$. Note that the vertices $u_{\alpha x}, u_{\alpha y}, u_{\alpha z}, u_{\alpha t}$ are not necessarily distinct. It can be witnessed that the set $\{v_x, v_y, v_z, v_t\}$ is independent. Indeed, otherwise, we can assume $v_x v_y \in E(G)$. Then this forms a cycle in $G$: $C_G[u_{\alpha x}, v_x, v_y, u_{\alpha y}]$ where $u_{\alpha x}$ is connected to $u_{\alpha y}$ via $P_{G_\alpha}[u_{\alpha x}, u_{\alpha y}]$. This implies that $e_x$ is not a cut-edge of $G$, which is a contradiction. Thus, $\{v_x, v_y, v_z, v_t\}$ is an independent set. By Claim 1, this leads to a contradiction.
    \begin{center}
        \begin{tikzpicture}[
    vertex/.style={circle, fill=black, inner sep=1.5pt},
    component/.style={draw, thick, ellipse, minimum width=2.8cm, minimum height=1.8cm},
    main_component/.style={draw, thick, ellipse, minimum width=4cm, minimum height=2.2cm}
]
    \node[main_component] (G_alpha) at (0, 0) {};
    \node[font=\bfseries] at (0, -0.6) {$G_\alpha$};
    \node[component] (G_x) at (-2.5, 3) {};
    \node[font=\bfseries] at (-2.5, 3.5) {$G_x$}; 
    \node[component] (G_y) at (2.5, 3) {};
    \node[font=\bfseries] at (2.5, 3.5) {$G_y$}; 
    \node[vertex, label=below:{$u_{\alpha x}$}] (u_x) at (-1.5, 0.55) {};
    \node[vertex, label=below:{$u_{\alpha y}$}] (u_y) at (1.5, 0.55) {};
    \node[vertex, label=left:{$v_x$}] (v_x) at (-2.5, 2.5) {};
    \node[vertex, label=right:{$v_y$}] (v_y) at (2.5, 2.5) {};
    \draw[thick, decorate, decoration={snake, segment length=3mm, amplitude=0.6mm}] 
        (u_x) to node[below=8pt] {$P_{G_\alpha}[u_{\alpha x}, u_{\alpha y}]$} (u_y);
    \draw[thick] (u_x) -- (v_x) node[midway, left=4pt, font=\bfseries] {$e_x$};
    \draw[thick] (u_y) -- (v_y) node[midway, right=4pt, font=\bfseries] {$e_y$};
    \draw[thick,dashed] (v_x) -- (v_y) node[midway, above=2pt, font=\bfseries] {};
\end{tikzpicture}
    \end{center}
\end{proof}
\begin{claim}\label{claim3}
The cardinality of the set $S$ is at most $k+1$.
\end{claim}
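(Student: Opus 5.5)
We argue by contradiction: suppose $|S| = k+2$, so every component $G_1,\ldots,G_{k+2}$ of $G-B$ lies in $S$. By the definition of $S$, every vertex of $G$ is then incident to at least one cut-edge of $B$. Each edge of $B$ covers exactly two vertices, so
\[
n \le 2|B| = 2(k+1).
\]
Compare this with the hypothesis of Theorem \ref{thm0.1}. For $k=6$ we have $n\ge 17>14$. For $k\ge 7$ the lower bounds on $n$ ($286,207,190$, and $k^2+7k+18+\frac{33}{k-2}$) far exceed $2k+2$. So for $k\ge 6$ the claim follows at once. It remains to treat $k\in\{3,4,5\}$, where $n\ge 8,9,11$ while $2k+2=8,10,12$.

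For the small cases I would use the structure of $B$. Since $G-B$ has exactly the components $G_j$, every edge of $G$ joining two different components belongs to $B$. Contracting each $G_j$ to a point therefore gives a tree $T$ on $k+2$ nodes with $k+1$ edges. By Claim \ref{claim2}, applied with all components in $S$, every node of $T$ has degree at most $3$. Consequently, if four components are pairwise non-adjacent in $T$, then any choice of one vertex from each gives an independent set in $G$. This contradicts Claim \ref{claim1}. So it suffices to produce either four pairwise non-adjacent nodes of $T$, or more generally four vertices in distinct components with no $B$-edges among them.

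Since $T$ is bipartite, its larger colour class has at least $\lceil (k+2)/2\rceil$ nodes. This is at least $4$ when $k\ge 6$, but only $3$ or $4$ when $k\in\{3,4,5\}$. The main obstacle is therefore the near-extremal configurations in which $T$ is close to a path, for instance a path on $5$ or $6$ nodes, whose independence number is $3$. Here I would exploit the tight counting. With $n\ge 2k+2$ or $n\ge 2k+1$, the cut-edges of $B$ form an almost perfect matching: the endpoints of distinct cut-edges are distinct, with at most one exception. A leaf of $T$ is then a single vertex of degree $1$. Moreover, in a component of $T$-degree $2$ or $3$, the cut-edges leave from two or three different vertices of that component.

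Using this, I would choose the four vertices not component by component but vertex by vertex. In a path-like $T$, pick the endpoint vertices of non-consecutive cut-edges, choosing within each component a vertex whose unique cut-edge leads away from the components already used. This gives four vertices in four distinct components with no cut-edges among them, hence an independent set, again contradicting Claim \ref{claim1}. If that choice fails in some tiny configuration, I would fall back on the direct estimate used in Claim \ref{claim1}. Such configurations have $n\le 2k+2$ and every vertex of degree small relative to $\sigma_4(G)$, so an independent $4$-set of total degree at most $k+4$ exists, while $\sigma_4(G)\ge 2n-7$, $n$, or $n-1$ respectively exceeds this.
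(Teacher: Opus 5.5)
Your global count is correct and is a genuinely different route from the paper. The paper contracts to $T$ and runs a case analysis on $|L(T)|\in\{\ge 4,2,3\}$ with local size counts. Your bound $n\le 2|B|=2k+2$ kills $k\ge 6$ outright. Your bipartition remark also finishes $k=5$, since $\lceil 7/2\rceil=4$ pairwise non-adjacent nodes of $T$ give an independent $4$-set across four components of $S$.

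The gap is in $k\in\{3,4\}$, which you leave as a sketch:
\begin{itemize}
\item Your selection rule is stated only for path-like $T$.
\item The rule relies on each vertex having a unique cut-edge, which fails at the vertex meeting two edges of $B$ that can occur when $k=4$, $n=9$.
\item You do not treat the non-path trees with $\alpha(T)=3$ that survive Claims~\ref{claim1} and~\ref{claim2}. For $k=3$ this is $K_{1,3}$ with one edge subdivided; for $k=4$ it is the spider with legs of lengths $1,2,2$.
\end{itemize}
The fallback does not repair this. Saying that ``an independent $4$-set of total degree at most $k+4$ exists'' presupposes exactly the independent set whose existence is in question, so as written the small cases are circular.

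Your near-matching observation is the right ingredient; what is missing is a uniform choice rule. For $k\in\{3,4,5\}$ the hypotheses give $n\ge 2k+1$. The numbers $d_B(v)\ge 1$ of edges of $B$ at the vertices $v$ sum to $2k+2$. Hence at most one vertex $w$ meets two edges of $B$, and every other vertex meets exactly one.
\begin{itemize}
\item Root $T$ at the node whose component contains $w$ (anywhere if there is no such $w$).
\item In each non-root component $G_j$, take the endpoint $p_j$ of the edge of $B$ joining $G_j$ to its parent.
\end{itemize}
Each $p_j$ meets only that one edge of $B$, and distinct nodes have distinct parent edges. Every edge between different components lies in $B$. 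So the $k+1\ge 4$ vertices $p_j$ are pairwise non-adjacent and lie in distinct components of $S$, contradicting Claim~\ref{claim1}. With this step your argument covers $k\in\{3,4,5\}$ uniformly and replaces the paper's Cases C1--C3.
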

\begin{proof}
    Suppose for the sake of contradiction that $m = k+2$. We contract each component $G_\alpha \in S$ into a single vertex $u_\alpha$, while keeping the set of cut-edges $B$ to connect these vertices. Since $B$ is a set of cut-edges, the resulting graph is a tree $T$ of order $k+2$. We consider the following cases based on the number of leaves, denoted by $|L(T)|$ of the tree $T$:

    \noindent\textbf{Case C1. $|L(T)| \ge 4$.}
    In this case, the leaves of $T$ form an independent set of size at least 4. This directly contradicts Claim 1.

    \noindent\textbf{Case C2. $|L(T)| = 2$.}
    Then the tree $T$ has no branch vertices and hence is a path.
    
    \textbf{Subcase C2.1. $k \ge 5$.}
        We have $m = k+2 \ge 7$. Now we can always choose 4 pairwise non-adjacent vertices in $T$. By Claim 1, this leads to a contradiction.
        
    \textbf{Subcase C2.2. $k = 4$.}
        We have $m = k+2 = 6$. Suppose $T$ is the path $P_T[u_1, u_2, u_3, u_4, u_5, u_6]$. 
        If $|G_1| + |G_6| \ge 3$, assume without loss of generality that $|G_6| \ge 2$. Then there exists a vertex $v_6 \in V(G_6)$ such that $N_G(v_6) \subseteq V(G_6)$, which contradicts the property of the set $S$. 
        Thus, $|G_1| = |G_6| = 1$. Then we have $(|G_2| + |G_4|) + (|G_3| + |G_5|) = n - 2 \ge 7$. Assume $|G_3| + |G_5| \ge 4$. Since $G_3$ and $G_5$ are incident to exactly two cut-edges, every vertex of $G_3$ and $G_5$ must be incident to a cut-edge, we deduce that $|G_3| = |G_5| = 2$. Hence, we obtain an independent set consisting of 4 vertices belonging to distinct components of $S$, contradicting Claim 1.

    \textbf{Subcase C2.3. $k = 3$.}
        We have $m = 5$ and $T$ is the path $P_T[u_1, u_2, u_3, u_4, u_5]$. By similar arguments, we obtain $|G_1| = |G_5| = 1$ and $|G_2| = |G_3| = |G_4| = 2$, which again contradicts Claim 1.

    \noindent\textbf{Case C3. $|L(T)| = 3$.}
    The tree $T$ has exactly one branch vertex $u$, corresponding to $G_u \in S$, with $d_T(u) = 3$. Let $u_1, u_2, u_3$ be the 3 leaves of $T$, where $|G_{u_i}| = 1$ for each $i \in [3]$. 
    If the distance from $u$ to all leaves is at least 2, then $\{u, u_1, u_2, u_3\}$ is an independent set of 4 vertices, a contradiction. Therefore, without loss of generality, assume that $d_T(u, u_1) = 1$ and 
    \begin{equation}
        d_T(u, u_2) \le d_T(u, u_3).
    \end{equation}
    We have
    \begin{equation}
        d_T(u, u_2) + d_T(u, u_3) = k+1 - d_T(u, u_1) = k.
    \end{equation}
    From (1) and (2), we obtain $d_T(u, u_3) \ge 2$. Hence,  $P_T (u,u_3) = P_T [u,u_3] \setminus \{u , u_3\} \ne \emptyset$. Let $v = N_T(u) \cap P_T [u,u_3]$.

    \noindent\textbf{Subcase C3.1.} $|G_u| = 1.$ Then $ V(G_u) = \{u\}$.
\begin{itemize}
    \item If $k = 3$, then from (2), we have $d_T(u, u_2) = 1$ and $d_T(u, u_3) = 2$. Therefore $|G_v| = n - |\{u_1, u_2, u_3, u\}| = n - 4 \ge 4$, a contradiction since $G_v$ is incident to exactly 2 cut-edges.
    
    \item If $k = 4$, then we have $d_T(u, u_2) = 1$ and $d_T(u, u_3) = 3$. It follows that $\{u_1, u_2, u_3, v\}$ is an independent set, contradicting Claim 1. Thus, we consider $d_T(u, u_2) = d_T(u, u_3) = 2$. Let $w \in V(P_T(u, u_2))$. We have $|G_v| + |G_w| = n - 4 \ge 5$, implying $|G_v| \ge 3$ or $|G_w| \ge 3$, a contradiction.
    
    \item If $k \ge 5$, then combining with (2), we obtain $d_T(u, u_3) \ge 3$. This implies $\{u_1, u_2, u_3, v\}$ is independent, contradicting Claim 1.
\end{itemize}

\noindent\textbf{Subcase C3.2.} $|G_u| = 2$. Let $V(G_u) = \{u_{11}, u_{12}\}$ with $u_{1}u_{11} \in E(G)$.

\textbf{Subcase C3.2.1.} $d_T(u, u_2) = 1$. If $k \ge 4$, then $d_T(u, u_3) = k - d_T(u, u_2) \ge 3$. Therefore the set $\{u_{1}, u_2, u_3, v\}$ is independent, contradicting Claim 1. If $k = 3$, then $d_T(u, u_3) = 2$. This implies that $|V(G_v)| = n - |\{u_1, u_2, u_3, u_{11}, u_{12}\}| = n - 5 \ge 3$, a contradiction.

\textbf{Subcase C3.2.2.} $d_T(u, u_2) \ge 2$. This implies $d_T(u, u_3) \ge d_T(u, u_2) \ge 2$. Thus $\{u_1, u_2, u_3, u_{12}\}$ is an independent set, contradicting Claim 1.

\noindent\textbf{Subcase C3.3.} $|G_u| = 3$. Assume $V(G_u) = \{u_{11}, u_{12}, u_{13}\}$, where $u_{11}u_{1} \in E(G)$, \(u_{12}\) is the vertex of \(G_u\) closest to \(u_2\), and
\(u_{13}\) is the vertex of \(G_u\) closest to \(u_3\). By (2), we deduce $2 d_T(u, u_3) \ge d_T(u, u_2) + d_T(u, u_3) = k \ge 3$. Hence $d_T(u, u_3) \ge 2$. Therefore $\{u_1, u_2, u_3, u_{13}\}$ is an independent set, contradicting Claim 1.

This completes the proof of Claim 3.
\end{proof}

Return to Case 5. According to the above claims, we have that each vertex in $V(G_\alpha)$ with $G_\alpha \in S$ is adjacent to at most three cut-edges of $G$, so if we contract each component $G_\alpha$ of $S$ into a single vertex and keep all the cut-edges connecting components of $S$, the resulting graph $N$ is an induced subgraph of $G$ on the set $S$. Then $N$ will fall into one of the following two possibilities: 

\noindent\textbf{Case 5.1. }$\Delta(N) =2$. 

In this case, $N$ is a linear forest of order $m$. If $m \ge 7$, we can easily choose four vertices in $N$ to form an independent set, which is a contradiction to Claim 1. Therefore we must have $m \le 6$, combining with the hypothesis $m \ge 4$, we get $4 \le m \le 6$. 

\textbf{Case 5.1.1. }$\alpha(N) = 3$. 

We can categorize the configurations of $N$ based on the value of $m$ as follows:

If $m=4$, then $N \in \{P_3 \cup K_1\}$.

If $m=5$, then $N \in \{P_5, P_3 \cup P_2, P_4 \cup P_1\}$.

If $m=6$, then $N \in \{P_6, 3P_2, P_4 \cup P_2\}$.

It can be seen that we can always choose three non-adjacent vertices $v_i \in V(G_i)$ with $i \in \{x,y,z\} \subset [m]$ such that $\displaystyle\sum_{i \in \{x,y,z\}} d_G(v_i) \le \lvert B \rvert = k+1$ and $\{v_x, v_y, v_z, v_t\}$ forms an independent set with $v_t \in V(G_t), t \in [k+2]\setminus[m]$. Then, $\displaystyle\sum_{i \in \{x,y,z\}}d_G(v_i)+d_G(v_t) \ge \sigma_4(G)$. It follows that 
\[
\lvert V(G_t) \rvert \ge d_G(v_t) + 1 \ge \sigma_4(G)  - \displaystyle\sum_{i \in \{x,y,z\}} d_G(v_i) + 1 \ge \sigma_4(G) - (k+1) +1 =\sigma_4(G)-k
\]
for all $t \in [k+2]\setminus[m]$. 

$\bullet$ For $k=3$, we have $\lvert V(G_t) \rvert \ge \sigma_4(G)-3 \ge (2n-7)-3 = 2n-10$ for all $t \in [5]\setminus [m]$, since $\sigma_4(G) \ge 2n-7$. By Claim \ref{claim3}, we obtain $m=4$. Then, $n \ge 4.1 + (2n-10)=2n-6>n$ for all $n \ge 8$, a contradiction. 

$\bullet$ For $k=4$, we have $\lvert V(G_t) \rvert \ge \sigma_4(G)-4 \ge n-4$ for all $t \in [6]\setminus [m]$, since $\sigma_4(G) \ge n$. By Claim \ref{claim3}, we get $m\in \{4;5\}.$

If $m=4$, then $n  \ge 4.1 + 2(n-4) = 2n-4 >n$, a contradiction. 

If $m=5$, then $n \ge 5.1 + (n-4) = n+1 >n$, a contradiction. 

$\bullet$ For $k=5$, we have $\lvert V(G_t) \rvert \ge \sigma_4(G)-5 \ge (n-1)-5 = n-6$ for all $t \in [7]\setminus [m]$, since $\sigma_4(G) \ge n-1$. 

If $m=4$, then $n \ge 4.1 + 3(n-6) = 3n-14 > n$, a contradiction. 

If $m=5$, then $n \ge 5.1 + 2(n-6)=2n-7 >n$, a contradiction. 

If $m=6$, then $\lvert V(G_i) \rvert=1$ for all $i \in [6]$. For $i\in \{1,2\}$ and $v_7 \in V(G_7)$ such that $N(v_7) \subseteq V(G_7)$, $\{v_i, v_{i+2}. v_{i+4}, v_7\}$ is an independent set and hence
\[
d_G(v_i) + d_G(v_{i+2}) + d_G(v_{i+4} )+d_G(v_7) \le 1+2+2+(n-6-1) = n-2 <n-1,
\]
a contradiction (see graph $H_3$ in Example 3).

$\bullet$ For $k=6$, we have $\lvert V(G_t) \rvert \ge \sigma_4(G) - 6 \ge \frac{n+5}{2} - 6$ for all $t \in [8] \setminus [m]$ since $\sigma_4(G) \ge \frac{n+5}{2}$.

If $m=4$, then $n \ge 4.1 + 4\left(\frac{n+5}{2}-6\right) =2n-10 > n$, a contradiction. 

If $m=5$, then $n \ge 5.1 + 3\left(\frac{n+5}{2}-6\right) =\frac{3n-11}{2}> n$, a contradiction. 

If $m=6$, then $\lvert V(G_7) \rvert+\lvert V(G_8) \rvert = n-6$. Without loss of generality, we assume $\lvert V(G_7) \rvert \le \frac{n-6}{2}$, implying that 
\[
d_G(v_7) \le \lvert V(G_7) \rvert -1 \le \frac{n-6}{2} -1 =\frac{n-8}{2}.
\]
Therefore 
\[
\sigma_4(G) \le 2+2+2+(\frac{n-8}{2}) = \frac{n+4}{2} < \frac{n+5}{2}.
\]
This is a contradiction (see graph $H_4$ in Example 4).

$\bullet$ For $k\ge7$, we have $\sigma_4(G) \ge \frac{4n-4k-7}{k+2}$ and therefore the total number of vertices in graph $G$ is 
\begin{align*}
n &= \sum_{j=1}^{k+2} \lvert V(G_j)\rvert = \sum_{j=1}^{m} \lvert V(G_j)\rvert + \sum_{j = m+1}^{k+2} \lvert V(G_j)\rvert \ge m + (k+2-m)(\sigma_4(G) - k) \\ 
&> 4+(k-4)(\sigma_4(G)-k) \ge 4+(k-4)\left(\frac{4n-4k-7}{k+2}-k\right).
\end{align*}
For $k=7, k=8, k=9$, we obtain 
\[
n \le 86, n\le \frac{218}{3}, n\le 74,
\]
respectively, a contradiction. For $k \ge 10$, we have
\[
n \le \frac{1}{3}k^2+\frac{8}{3}k+9+\frac{42}{k-6} < k^2+7k+18+\frac{33}{k-2}.
\]
This is also a contradiction.

\textbf{Case 5.1.2. }$\alpha(N) =2$. 

It can be seen that when $\Delta(N)=2$ and $\alpha(N) = 2$, we have $N \cong P_4$ or $N \cong 2P_2$. 

$\bullet$ For $k=3$, graph $G$ has $5$ components, of which $4$ belong to $S$. To form an independent set consisting of 4 vertices, we must choose 2 non-adjacent vertices in $S$ and 2 non-adjacent vertices both belonging to the remaining component outside $S$. Thus we have the estimation in this case $\sigma_4 (G) \le 1+2+(n-4-1-1)+(n-4-1)=2n-8<2n-7$, leading to a contradiction (see graph $H_1$ in Example 1).

$\bullet$ For $k\ge 4$, by the similar arguments as in Subcase 4.2, we have the estimation
\[
2n = 2\sum_{j=1}^{k+2} \lvert V(G_j)\rvert \ge 8 + (k-2)(\sigma_4(G) - k +1).
\]
For $k=4,k=5,k=7,k=8,k=9$, we obtain
\[
2n+2\le 2n, n \le 7, n\le \frac{373}{2}, n \le \frac{287}{2}, n \le \frac{829}{6},
\]
respectively, a contradiction. 

\noindent For $k=6$, the above inequality does not lead to a contradiction. We consider this case separately as follows. When $k=6$, we have $\lvert B \rvert=7$ and $G - B=\{G_i\}$, $i\in[8]$. From each component, we can choose a vertex $v_i \in V(G_i)$ for all $i \in [8]$ such that $v_i, i \in [4]$ is adjacent to at least one cut-edge and $N(v_i) \subseteq V(G_i)$ for all $i\in [8]\setminus [4]$. Without loss of generality, suppose that there exists a path in $G$ passing through 8 vertices $v_5, v_1, v_2,v_6, v_3, v_4, v_7,$ and $v_8$, in this order. Now obviously $\{v_1, v_3, v_5, v_6\}$ and $\{v_2, v_4, v_7, v_8\}$ are independent sets, so 
\begin{equation*}
\begin{cases}
    d_G(v_1) + d_G(v_3) + d_G(v_5) + d_G(v_6) \ge \sigma_4(G) = \frac{n+5}{2}, \\ 
    d_G(v_2) + d_G(v_4) + d_G(v_7) + d_G(v_8) \ge \sigma_4(G) = \frac{n+5}{2}.
\end{cases}
\end{equation*}
This implies $\displaystyle \sum_{i \in [8]} d_G(v_i) \ge 2\sigma_4(G) = n+5$. Nevertheless 
\[
    \sum_{i=1}^{8} d_G(v_i) \le \sum_{i=1}^{8}(\lvert V(G_i) \rvert-1) + (\lvert B \rvert+1) = (n-8)+8=n,
\]
a contradiction.

\noindent For $k \ge 10$, we have 
\[
n \le \frac{1}{2}k^2+\frac{9}{2}k+\frac{41}{2}+\frac{110}{k-6} < k^2+7k+18+\frac{33}{k-2}.
\]
This is also a contradiction.

\noindent\textbf{Case 5.2. } $\Delta(N) = 3$. 

For $\Delta(N) = 3$, the contracted graph $N$ is a tree with at least one vertex of degree 3. If $m \ge 7$, then there always exist four non-adjacent vertices in $S$ to form an independent set. This contradicts Claim 1. Therefore, we only need to consider the cases where $m \in \{4,5,6\}$. Moreover, since $N$ has at least 3 leaves, let $u_1, u_2, u_3$ be three vertices belonging to distinct components of $S$ corresponding to 3 leaves of $N$. Then, we obtain $\displaystyle \sum_{i \in [3]} d_G(u_i) \le \lvert B \rvert = k+1$.

There exists a vertex $v_j \in V(G_j)$ with $j \in [k+2]\setminus [m]$ such that $\{u_1,u_2,u_3,v_j\}$ forms an independent set. Then, $\displaystyle \sum_{i \in [3]} d_G(u_i) + d_G(v_j) \ge \sigma_4(G)$. This implies \[ 
    d_G(v_j) \ge \sigma_4(G) - \sum_{i \in [3]} d_G(u_i) \ge\sigma_4(G) - (k+1).
    \] 
Thus $\lvert V(G_j) \rvert \ge d_G(v_j)+1 \ge \sigma_4(G) - (k+1)+1 = \sigma_4(G) - k$ for all $j \in [k+2]\setminus [m]$.

 \textbf{Case 5.2.1. }$k=3$. For $m=4$, there is one component, say $G_5$, outside $S$ with $\lvert V(G_5) \rvert \ge \sigma_4(G) - k \ge (2n-7)-3 = 2n-10$, since $\sigma_4(G) \ge 2n-7$. The total number of vertices in $G$ can be expressed as
    \[
    n =\sum_{i=1}^{5} \lvert V(G_i)\rvert = \sum_{i=1}^{4} \lvert V(G_i)\rvert + \lvert V(G_5)\rvert \ge 4.1+(2n-10)=2n-6.
    \]
This implies $n\le 6$, contradicting $n \ge 8$
.

    \textbf{Case 5.2.2. }$k=4$. 

    For $m=4$, we have $n = \displaystyle \sum_{i \in [6]} \lvert V(G_i)\rvert \ge 4.1+2(n-4) = 2n-4 >n$, a contradiction.

    For $m=5$, we have $n = \displaystyle\sum_{i \in [6]} \lvert V(G_i)\rvert \ge 5.1+(n-4) = n+1$, a contradiction.

     \textbf{Case 5.2.3. }$k=5$. 

    For $m=4$, we have $n =\displaystyle \sum_{i \in [7]} \lvert V(G_i)\rvert \ge 4.1+3(n-5) = 3n-11>n$, a contradiction.

    For $m=5$, we have $n = \displaystyle\sum_{i \in [7]} \lvert V(G_i)\rvert \ge 5.1+2(n-5) = 2n-5>n$, a contradiction.

    For $m=6$, choose three leaves of $N$, then $\sigma_4(G) \le 1+2+2+(n-6-1)=n-2 <n-1$, a contradiction.

    \textbf{Case 5.2.4. }$k=6$.

    For $m=4$, we have $n = \displaystyle\sum_{i \in [8]} \lvert V(G_i)\rvert \ge 4.1+4\left(\frac{n+5}{2}-6\right) = 2n-10>n$, a contradiction.

    For $m=5$, we have $n = \displaystyle\sum_{i \in [8]} \lvert V(G_i)\rvert \ge 5.1+3\left(\frac{n+5}{2}-6\right) = \frac{3n-11}{2}>n$, a contradiction.

    For $m=6$, we have $\lvert V(G_7) \rvert \ge \sigma_4(G) - k \ge \frac{n+5}{2} - 6 = \frac{n-7}{2}$. Similarly, we also have $\lvert V(G_8) \rvert \ge \frac{n-7}{2}$. Therefore, $\displaystyle\sum_{i \in [6]} \lvert V(G_i) \rvert \le n - \frac{2(n-7)}{2} = 7$. Since $\Delta(N)=3$ and there does not exist four independent vertices in $N$, we deduce $\lvert V(G_i) \rvert=1$ for all $i \in [6]$. Let $V(G_i) = \{u_i\}$ for all $ i \in [6]$. Then $\lvert V(G_7) \rvert+\lvert V(G_8) \rvert=n-6$. Without loss of generality, assume $\lvert V(G_7) \rvert \le \frac{n-6}{2}$, implying 
    \[
    d_G(v_7) \le \lvert V(G_7) \rvert-1 = \frac{n-6}{2}-1=\frac{n-8}{2}.
    \]
    Since $\sigma_4(G) \ge \frac{n+5}{2}$, assume $u_1, u_2, u_3$ are three non-adjacent vertices in $S$, then 
    \[
    d_G(u_1) + d_G(u_2) + d_G(u_3) \ge \sigma_4(G) - d_G(v_7) \ge \frac{n+5}{2} - \frac{n-8}{2} =\frac{13}{2}.
    \]
    This is a contradiction because $\displaystyle\sum_{i \in [6]} d_G(u_i) \le 1+3+2+1+2+2=11 < 2\cdot\frac{13}{2}.$

    \textbf{Case 5.2.5. }$k \ge 7$. The total number of vertices in graph $G$ is
\begin{align*}
n &= \sum_{j=1}^{k+2} \lvert V(G_j)\rvert = \sum_{j=1}^{m} \lvert V(G_j)\rvert + \sum_{j=m+1}^{k+2} \lvert V(G_j)\rvert \ge m + (k+2-m)(\sigma_4(G)-k) \\ &> 4+(k-4)(\sigma_4(G)-k) \ge 4+(k-4)\left(\frac{4n-4k-7}{k+2}-k \right).
\end{align*}
For $k=7, k=8, k=9$, we obtain
\[
n \le 86, n\le \frac{218}{3}, n\le 74,
\]
a contradiction. For $k \ge 10$, we have 
\[
n\le \frac{1}{3}k^2+\frac{8}{3}k+9+\frac{42}{k-6} < k^2+7k+18+\frac{33}{k-2}.
\]
This is a contradiction.

    Five cases above all lead to contradictions. Thus the assumption is false, therefore $G$ has at most $k$ cut edges. This completes the proof of Theorem \ref{thm0.1}.
\end{proof}

We end this section by providing some examples to demonstrate that the conditions given in Theorem \ref{thm0.1} are best possible.
\begin{example}
	\upshape
	For $k=3$, let $H_1$ be a graph constructed by identifying an end vertex $u$ of a path $P_5$ with a vertex of a complete graph $K_{n-4}$, and deleting an edge $uv$ where $v \in V(K_{n-4}) \setminus \{u\}$ (see Figure 1). Then, $H_1$ is a graph of order $n$ and $\sigma_4(H_1)=1+2+(n-4-1-1)+(n-4-1)=2n-8<2n-7$. However, $H_1$ has $4$ cut-edges.
	\begin{center}
		\begin{tikzpicture}[
			scale=1.2,
			vertex/.style={circle, draw, fill=black, inner sep=1pt, minimum size=4pt},
			clique/.style={ellipse, draw, thick, minimum width=3cm, minimum height=2cm},
			edge/.style={thick},
			missing/.style={thick, dashed}
			]
			\node[clique] (K) at (4,0) {};
			\node at (4, 0.4) {$K_{n-4}-uv$};
			\node[vertex, label={[xshift=-8pt]above:$u$}] (u) at (K.west) {};
			\node[vertex, label=right:$v$] (v) at (4.5, 0) {};
			\node[vertex] (x4) at (1.2, 0) {};
			\node[vertex] (x3) at (-0.1, 0) {};
			\node[vertex] (x2) at (-1.4, 0) {};
			\node[vertex] (x1) at (-2.7, 0) {};
			\draw[edge] (x1) -- (x2);
			\draw[edge] (x2) -- (x3);
			\draw[edge] (x3) -- (x4);
			\draw[edge] (x4) -- (u);
			\draw[missing] (u) -- (v);
			\node at ($(u)!0.5!(v)$) {\small $\times$};
			\node[below=1.5cm] at (1,0) {\textbf{Figure 1.} Graph $H_1$ for $k=3$.};
		\end{tikzpicture}
	\end{center}
\end{example}

\begin{example}
	\upshape
	For $k=4$, let $H_2$ be a graph constructed by identifying an end vertex of a path $P_6$ with a vertex of a complete graph $K_{n-5}$ (see Figure 2). Then, $H_2$ is a graph of order $n$ and $\sigma_4(G_2)=1+2+2+(n-5-1)=n-1 < n$. However, $H_2$ has $5$ cut-edges.
	\begin{center}
		\begin{tikzpicture}[
			scale=1.2,
			vertex/.style={circle, draw, fill=black, inner sep=1pt, minimum size=3pt},
			clique/.style={ellipse, draw, thick, minimum width=2.5cm, minimum height=1.25cm},
			edge/.style={thick}
			]
			\node[clique] (K) at (7,0) {};
			\node at (7,0) {\large $K_{n-5}$};
			\node[vertex] (v5) at (K.west) {};
			\node[vertex] (v4) at ($(v5)+(-1.5,0)$) {};
			\node[vertex] (v3) at ($(v5)+(-3.0,0)$) {};
			\node[vertex] (v2) at ($(v5)+(-4.5,0)$) {};
			\node[vertex] (v1) at ($(v5)+(-6.0,0)$) {};
			\node[vertex] (v0) at ($(v5)+(-7.5,0)$) {};
			\draw[edge] (v0) -- (v1) node[midway, below] {};
			\draw[edge] (v1) -- (v2) node[midway, below] {};
			\draw[edge] (v2) -- (v3) node[midway, below] {};
			\draw[edge] (v3) -- (v4) node[midway, below] {};
			\draw[edge] (v4) -- (v5) node[midway, below] {};
			\node[below=1cm] at (3, 0) {\textbf{Figure 2.} Graph $H_2$ for $k=4$.};
		\end{tikzpicture}
	\end{center}
\end{example}

\begin{example}
	\upshape
	For $k=5$, let $H_3$ be a graph constructed by identifying an end vertex of a path $P_7$ with a vertex of a complete graph $K_{n-6}$ (see Figure 3). Then, $G_3$ is a graph of order $n$ and $\sigma_4(H_3)=1+2+2+(n-6-1)=n-2 < n-1$. However, $H_3$ has $6$ cut-edges.
	\begin{center}
		\begin{tikzpicture}[
			scale=1.1,
			vertex/.style={circle, draw, fill=black, inner sep=1pt, minimum size=3pt},
			clique/.style={ellipse, draw, thick, minimum width=2.5cm, minimum height=1.25cm},
			edge/.style={thick}
			]
			\node[clique] (K) at (9,0) {};
			\node at (9,0) {\large $K_{n-6}$};
			\node[vertex] (v6) at (K.west) {};
			\node[vertex] (v5) at ($(v6)+(-1.4,0)$) {};
			\node[vertex] (v4) at ($(v5)+(-1.4,0)$) {};
			\node[vertex] (v3) at ($(v4)+(-1.4,0)$) {};
			\node[vertex] (v2) at ($(v3)+(-1.4,0)$) {};
			\node[vertex] (v1) at ($(v2)+(-1.4,0)$) {};
			\node[vertex] (v0) at ($(v1)+(-1.4,0)$) {};
			\draw[edge] (v0) -- (v1);
			\draw[edge] (v1) -- (v2);
			\draw[edge] (v2) -- (v3);
			\draw[edge] (v3) -- (v4);
			\draw[edge] (v4) -- (v5);
			\draw[edge] (v5) -- (v6); 
			\node[below=1cm] at (4.5, 0) {\textbf{Figure 3.} Graph $H_3$ for $k=5$.};      
		\end{tikzpicture}
	\end{center}
\end{example}

\begin{example}
	\upshape
	For $k=6$, let $H_4$ be a graph constructed by identifying each end vertex of a path $P_8$ with a complete graph $K_{\frac{n-6}{2}}$ (see Figure 4). Then, $H_4$ is a graph of order $n$ and $\sigma_4(H_4)=2+2+2+(\frac{n-6}{2}-1)=\frac{n+4}{2} < \frac{n+5}{2}$. However, $H_4$ has $7$ cut edges.
	\begin{center}
		\begin{tikzpicture}[
			scale=1.0,
			vertex/.style={circle, draw, fill=black, inner sep=1pt, minimum size=3pt},
			clique/.style={ellipse, draw, thick, minimum width=2.5cm, minimum height=1.25cm},
			edge/.style={thick}
			]
			\node[clique] (L) at (0,0) {};
			\node at (0,0) {\large $K_{\frac{n-6}{2}}$};
			\node[clique] (R) at (10.5,0) {}; 
			\node at (10.5,0) {\large $K_{\frac{n-6}{2}}$};
			\node[vertex] (v1) at (L.east) {};
			\node[vertex] (v8) at (R.west) {};
			\foreach \i in {2,3,4,5,6,7} {
				\path (v1) -- (v8) node[pos={(\i-1)/7}, vertex] (v\i) {};
			}
			\draw[edge] (v1) -- (v2);
			\draw[edge] (v2) -- (v3);
			\draw[edge] (v3) -- (v4);
			\draw[edge] (v4) -- (v5);
			\draw[edge] (v5) -- (v6);
			\draw[edge] (v6) -- (v7);
			\draw[edge] (v7) -- (v8);
			\node[below=1cm] at (5, 0) {\textbf{Figure 4.} Graph $H_4$ for $k=6$.};    
		\end{tikzpicture}
	\end{center}
\end{example}

\begin{example}
	\upshape
	For $k \ge 7$. Given any two integers $k\ge7, m\ge k+1,$ let $D_{i}$ be $k+2$ copies of the complete graph $K_{m}$. We construct a connected graph $H_5$ by adding edges connecting each vertex $v\in V(D_{1})$ to exactly one vertex $u_{i}\in V(D_{i})$ where $i\in[k+2]\setminus\{1\}$ (see Figure 5). It is easy to see that the order of $H_5$ is $n=(k+2)m$ and $\sigma_{4}(H_5)=4(m-1)=\frac{4n-4k-8}{k+2} < \frac{4n-4k-7}{k+2}$. However, there are $k+1$ cut edges in $H_5$.
\end{example}
\begin{center}
	\begin{tikzpicture}[
		scale=0.9,
		vertex/.style={circle, draw, fill=black, inner sep=1pt, minimum size=3pt},
		clique/.style={circle, draw, thick, minimum size=2cm},
		satellite/.style={circle, draw, thick, minimum size=1.3cm},
		edge/.style={thick},
		cutedge/.style={thick} 
		]
		\node[clique] (D1) at (0,0) {};
		\node at (0,0) {$D_1 \cong K_m$};
		\begin{scope}[shift={(0, 4)}]
			\node[satellite] (D2) at (0,0) {};
			\node at (0, 0) {$D_2$};
			\node[vertex] (u2) at (D2.south) {};
		\end{scope}
		\begin{scope}[shift={(160:4)}]
			\node[satellite] (D3) at (0,0) {};
			\node at (0, 0) {$D_3$};
			\node[vertex] (u3) at (D3.east) {}; 
		\end{scope}
		\begin{scope}[shift={(20:4)}]
			\node[satellite] (Dk) at (0,0) {};
			\node at (0, 0) {$D_{k+2}$};
			\node[vertex] (uk) at (Dk.west) {}; 
		\end{scope}
		\node[vertex] (v2) at (D1.north) {};
		\draw[cutedge] (v2) -- (u2) node[midway, right, black] {};
		\node[vertex] (v3) at (D1.160) {};
		\draw[cutedge] (v3) -- (u3);
		\node[vertex] (vk) at (D1.20) {};
		\draw[cutedge] (vk) -- (uk);
		\node[below=1.5cm] at (0, 0) {\textbf{Figure 5.} Graph $H_5$ for $k\ge 7$.};    
	\end{tikzpicture}
\end{center}

\section{$cfc(G)$ and the degree sum}
To begin with, we present several results concerning the conflict-free connection number that will be used to establish our main findings. Czap et al. \cite{Czap} determined the conflict-free connection number for $2$-connected graphs.
\begin{lemma}[Czap et al. \cite{Czap}]
\label{lem_cfc(G)=2-2-connected}
If $G$ is a non-complete, $2$-connected graph, then $cfc(G)=2$.
\end{lemma}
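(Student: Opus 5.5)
This lemma is cited from Czap et al., so the aim is a self-contained argument using the standard ear/spanning-structure technique. The natural route is an \emph{induction on an ear decomposition}. Let $G$ be $2$-connected and non-complete. First, $cfc(G)\ge 2$: with one colour, every conflict-free path has length $1$, so non-adjacent vertices would have no conflict-free path. For the upper bound I would exhibit a $2$-edge-colouring, with colours $1$ and $2$, in which every pair of vertices is joined by a path using colour $1$ exactly once. Colour $2$ plays the role of a ``free'' colour that may be repeated.

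The key intermediate statement I would prove is stronger, so that it survives induction. Every $2$-connected graph $G$ has a $2$-colouring of a spanning $2$-connected subgraph such that, for every pair $x,y$, there is an $x$--$y$ path containing exactly one edge of colour $1$. Edges outside that subgraph may be coloured arbitrarily.

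Base case: a cycle $C_\ell$. Colour one edge with colour $1$ and all other edges with colour $2$. For any two vertices $x,y$, the two $x$--$y$ arcs partition the cycle, so exactly one of them contains the unique colour-$1$ edge. That arc is conflict-free.

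Inductive step: attach an ear $P$ with endpoints $a\ne b$ in the current graph $H$ and internal vertices $w_1,\dots,w_r$. I would colour the ear so that, from each internal vertex, one direction along $P$ reaches $H$ using only colour $2$, and the other direction uses exactly one colour-$1$ edge. Then I combine this with paths inside $H$.

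The main obstacle is this combination step. Concatenating an ear segment with an $H$-path can create two colour-$1$ edges, or a non-path (repeated vertices). So the inductive invariant must be strengthened. A plausible invariant is: for every vertex $x$ and every vertex $v$ of $H$, there exist both an $x$--$v$ path with no colour-$1$ edge and one with exactly one colour-$1$ edge. This is not literally true for all pairs, so the fallback I would try is the approach through a spanning tree.

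\emph{Spanning-tree fallback.} Use the known fact (Czap et al.) that $cfc(T)$ is controlled by the maximum degree and the path structure. A $2$-connected graph has a spanning tree with a suitable structure; the cleanest version is a Hamiltonian-like path or a tree whose colour-$1$ edges form a matching placed so that every tree path meets them at most once. Adding back the non-tree edges of $G$ then supplies detours.

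The step I expect to be hardest is choosing the colour-$1$ edge set so that every pair is served. Concretely, I would try letting the colour-$1$ edges be a single carefully placed edge $e=uv$ on a cycle through the relevant region. I would then use Menger's theorem (two internally disjoint paths, by $2$-connectivity) to route any $x$--$y$ path through $e$ exactly once and avoid it otherwise.

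Non-completeness is used only for the lower bound.
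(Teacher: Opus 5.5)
The paper gives no proof of this lemma; it is only cited from Czap et al. Your proposal therefore has to stand on its own, and as written it does not prove $cfc(G)\le 2$. The lower bound is fine. For the upper bound, the ear-decomposition induction is dropped at exactly the combination step, as you note yourself. The spanning-tree fallback cannot work as described. Any two edges of a tree lie on a common path, so requiring that every tree path meets the colour-$1$ edges at most once forces a single colour-$1$ edge, and then pairs on the same side of it are not served by the tree at all. Saying that non-tree edges ``supply detours'' is the whole difficulty, not a step. Your closing idea is only a plan, and it is aimed wrongly in two respects. First, the colour-$1$ edge must be fixed once for all pairs, whereas placing it ``on a cycle through the relevant region'' suggests it depends on $x,y$. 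Second, the pairwise form of Menger's theorem, which gives two internally disjoint $x$--$y$ paths, does not by itself produce a path through $e$, since neither of those paths need contain $e$.

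The missing ingredient is this: in a $2$-connected graph, for every edge $e=uv$ and any two distinct vertices $x,y$, some $x$--$y$ path contains $e$. Given this, colour an \emph{arbitrary} edge $e$ with $1$ and all other edges with $2$. The path through $e$ is conflict-free, and you are done; no careful placement is needed. To prove the ingredient, apply the set form of Menger's theorem to $A=\{x,y\}$ and $B=\{u,v\}$. No single vertex $s$ separates $A$ from $B$, because $G-s$ is connected and $A\setminus\{s\}$, $B\setminus\{s\}$ are nonempty. Hence there are two disjoint $A$--$B$ paths, say $P$ from $x$ to $u$ and $Q$ from $y$ to $v$ (possibly trivial), each meeting $A\cup B$ only at its ends. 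Then $P$, the edge $uv$, and $Q$ traversed backwards form the required $x$--$y$ path. Alternatively, subdivide $e$ by a new vertex $w$ and add a vertex $z$ adjacent to $x$ and $y$. The resulting graph is still $2$-connected, so it has a cycle through $z$ and $w$; delete $z$ and suppress $w$. With this fact supplied, your final approach becomes a complete and very short proof; without it, the proposal is a list of candidate strategies rather than a proof.
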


The authors in \cite{Chang2019} generalized the result of Lemma \ref{lem_cfc(G)=2-2-connected} to $2$-edge-connected graphs, as stated in the following theorem. 

\begin{corollary}[Chang et al. \cite{Chang2019}]
\label{cor_cfc(G)=2-2-edge-connected}
Let $G$ be a non-complete, $2$-edge-connected graph, then $cfc(G) = 2$.
\end{corollary}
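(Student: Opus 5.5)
We prove $cfc(G)\ge 2$ and $cfc(G)\le 2$ separately. The lower bound is immediate. Since $G$ is non-complete, it has two non-adjacent vertices $x,y$. Any $x$--$y$ path then has length at least $2$. Under a colouring with a single colour, such a path has no colour appearing exactly once, so $cfc(G)\ge 2$.

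For the upper bound, look at the block structure of $G$. Since $G$ is $2$-edge-connected, it has no cut-edges, so no block is a $K_2$. Every block is therefore either a non-complete $2$-connected graph or a complete graph $K_t$ with $t\ge 3$. If $G$ has only one block, it is non-complete and $2$-connected, and Lemma~\ref{lem_cfc(G)=2-2-connected} gives the result. Otherwise, build $G$ from its block--cut-vertex tree by induction on the number of blocks. At each step attach an end block $B$ at a cut vertex $c$ to the graph $G'$ formed by the remaining blocks. By induction $G'$ is $2$-edge-connected and has a good $2$-colouring, or it is complete; in the latter case it is a single block $K_t$, which must be handled directly.

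The difficulty is that conflict-free paths do not concatenate. A path from $u\in B$ to $w\in G'$ must pass through $c$, and if both the $B$-segment and the $G'$-segment have a unique colour, the union may have none. So we strengthen the induction hypothesis to a \emph{rooted} statement. For a $2$-connected graph $H$ with a designated vertex $c$, we want a colouring with colours $1,2$ in which:
\begin{itemize}
\item[(i)] every two vertices of $H$ are joined by a path containing exactly one edge of colour $1$;
\item[(ii)] every vertex $x$ is joined to $c$ both by a path with no edge of colour $1$ and by a path with exactly one edge of colour $1$.
\end{itemize}
For complete blocks $K_t$ with $t\ge3$ this is achieved by hand. Take colour-$1$ edges forming a star, or a single edge away from $c$, and check (i) and (ii), using that a triangle provides two routes. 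For non-complete $2$-connected blocks, we obtain (ii) by taking an ear decomposition starting from a cycle through $c$. On each ear, colour exactly one edge with colour $1$, chosen next to an endpoint, so that around $c$ both a colour-$1$-free route and a one-colour-$1$ route exist.

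Given (ii) in every block, a $u$--$w$ path through cut vertices is formed by concatenating segments. The segment in the block containing $u$ uses exactly one colour-$1$ edge, and every other segment uses none. Such a path has exactly one colour-$1$ edge and is conflict-free. The main obstacle is establishing the rooted property (ii) for a $2$-connected block simultaneously with (i). If ear decompositions resist this, I would try to choose the colour-$1$ edges to form a matching whose complement is still connected through $c$ in every block. Verifying that this complement is connected and reaches $c$ would then be the crux.
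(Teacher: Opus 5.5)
The paper gives no proof of this statement; it only quotes it from Chang et al. So your proposal has to carry the whole argument, and as written it does not. The lower bound and the block-by-block concatenation are routine. All the content sits in the per-block lemma: a $2$-edge-colouring of every $2$-connected block satisfying (i) and (ii). You leave exactly this unproved, offering only an ear-decomposition sketch and a fallback, and you call it the crux yourself. Without it, $cfc(G)\le 2$ is not established. The $2$-connected lemma of Czap et al. cannot serve as a black box for the blocks: it gives a conflict-free colouring, but says nothing about colour-$1$-free paths to cut vertices, and it does not cover complete blocks.

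There is also a framing issue. A block in the middle of the block--cut tree can contain several cut vertices, none of which need be its designated root $c$, so a genuinely rooted statement would not suffice. Fortunately (ii) is not really rooted. Its first half says exactly that the colour-$2$ edges form a connected spanning subgraph, and its second half is a special case of (i). This root-free form is what your concatenation actually uses: a colour-$2$ path between entry and exit cut vertices of each later block, and a path with one colour-$1$ edge from $u$ to the exit cut vertex of its own block. You should state it that way.

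Your ear-decomposition idea does close the gap once you carry this invariant explicitly, and it does not matter which ear edge gets colour $1$. The base case and the induction step go as follows.
\begin{itemize}
\item Start from a cycle with exactly one colour-$1$ edge. Of the two arcs between any two vertices, one contains that edge, and the remaining edges form a spanning colour-$2$ path.
\item Add an ear $P$ with ends $a\ne b$ and one colour-$1$ edge $f$ to $H$. Colour-$2$ connectivity survives, because $P-f$ consists of two colour-$2$ pieces attached at $a$ and at $b$.
\item A new vertex $x$ reaches an old vertex $y$ by running along $P$ through $f$ to an end, then along a colour-$2$ path of $H$.
\item Two new vertices separated by $f$ use the subpath of $P$ between them.
\item Two new vertices on the same side of $f$ leave $P$ at one end, cross to the other end by a colour-$2$ path of $H$, and re-enter through $f$.
\end{itemize}
That last case is precisely where colour-$2$ connectivity of $H$ is indispensable, and it is the verification your proposal is missing.

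For complete blocks, a single colour-$1$ edge works. A spanning colour-$1$ star does not: its centre is isolated in colour $2$, and two leaves are joined by no path with exactly one colour-$1$ edge. Finally, if you allow closed ears (whose two ends coincide), the same invariant proves the statement directly for any $2$-edge-connected graph. Two vertices on a closed ear are joined by the two arcs of its cycle, one of which contains $f$. So the block decomposition, and the appeal to Czap et al., can be dropped altogether.
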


Next, we present several results on the conflict-free connection number under various conditions involving cut-edges. Recall that $C(G)$ denotes the subgraph of $G$ induced by its cut-edges and $\vert C(G)\vert$ denotes the number of edges in $C(G)$. Suppose $C(G)$ is a linear forest consisting of $p$ ($p \geq 0$) components $Q_1,Q_2,\ldots, Q_p$ with $n_i=\vert V(Q_i) \vert$ and $2\leq n_1\leq n_2\leq \ldots\leq n_p$. 

\begin{theorem}[Chang et al. \cite{Chang2018}]
\label{cfc=2-chang2018}
If $G$ is a connected, non-complete graph with $C(G)$ being a linear forest with $2=n_1=n_2=\ldots =n_{p-1}\leq n_p\leq 4$ or $C(G)$ being edgeless, then $cfc(G)=2$.
\end{theorem}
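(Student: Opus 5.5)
The case where $C(G)$ is edgeless is immediate. Then $G$ is connected with no cut-edge, hence $2$-edge-connected, and since $G$ is non-complete, Corollary \ref{cor_cfc(G)=2-2-edge-connected} gives $cfc(G)=2$. So the work is in the case where $C(G)$ is a linear forest whose components $Q_1,\dots,Q_{p-1}$ are single edges and $Q_p$ is a path with at most three edges.

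The lower bound $cfc(G)\ge 2$ holds because $G$ is non-complete. With one colour, a path is conflict-free only if it has length one, so two non-adjacent vertices would have no conflict-free path. It therefore suffices to exhibit a $2$-colouring.

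For the upper bound, let $B=E(C(G))$ and let $G_1,\dots,G_{|B|+1}$ be the components of $G-B$. Each $G_i$ is either trivial, $2$-edge-connected and non-complete, or complete of order at least $3$. Contracting every $G_i$ to a point turns $B$ into a tree $T$ (the bridge tree). I would build the colouring in three steps.

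\begin{enumerate}
\item Colour every edge of $B$ with colour $1$. Within each nontrivial $G_i$, fix a $2$-colouring with a stronger property than conflict-free connection. The property I would aim for: every two vertices $x,y$ of $G_i$ are joined both by a path containing no edge of colour $1$ and by a path containing exactly one edge of colour $1$. Call these a $0$-path and a $1$-path. For a complete $G_i$ of order at least $3$, take a suitable colouring of a triangle-based spanning structure. For non-complete $G_i$, obtain it from the ear-decomposition argument underlying Lemma \ref{lem_cfc(G)=2-2-connected} and Corollary \ref{cor_cfc(G)=2-2-edge-connected}.
\item Given $u,v$ in different components, take the unique route in $T$ between their components. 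Realise it as a path in $G$ using $0$-paths inside every intermediate component. The number of colour-$1$ edges on the resulting path then equals the number of bridges crossed.
\item If exactly one bridge is crossed, the path is conflict-free. If several are crossed, parity or ``exactly one'' fails, and the flexibility of switching between $0$- and $1$-paths inside components is not enough. So the colouring of $B$ cannot be uniform.
\end{enumerate}

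I expect the colouring of $B$ to be the main obstacle, and here the hypothesis on the $n_i$ is used. Because every component of $C(G)$ except $Q_p$ is a single edge, no vertex outside $Q_p$ meets two bridges. My first try is to colour $B$ so that along any route in $T$ exactly one distinguished colour occurs once. Concretely, the bridges of $Q_p$ (at most three, forming a path) would get the colours $1,2,1$ or $1,2$ or $2$ as appropriate, with a matching recolouring of the component colourings. The isolated bridges would be handled by orienting $T$ from a root component and adjusting each component's internal colouring, swapping which colour plays the role of the unique colour, level by level.

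The delicate cases will be routes that pass through several trivial components; these are exactly the internal vertices of $Q_p$, and there no internal freedom is available. This is why I expect the bound $n_p\le 4$, since a longer bridge path would force a route with too many forced colours. I would verify these few configurations ($n_p\in\{2,3,4\}$) by hand. If the internal-colouring property in step 1 proves too strong to obtain for complete components, I would fall back on treating complete $G_i$ separately, using that a clique of order at least $3$ contains a triangle and hence both an even and an odd path between any two of its vertices.
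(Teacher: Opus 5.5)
The paper does not prove this statement; it quotes it from Chang et al. Your argument therefore has to stand on its own, and it has a genuine gap exactly where you locate the main obstacle. Once step~1 is available, the whole content of the statement is the colouring of the bridges, and you never actually produce it.

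Colouring every bridge with colour~$1$ fails, as you note, because colour~$1$ is the colour meant to occur exactly once. Your replacement is only a sketch. In particular, letting the roles of the two colours alternate level by level in $T$ is not specified. It is also unclear how it would handle a route through two components with opposite conventions, since their $0$-paths then contribute edges of both colours.

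The missing idea is the opposite choice. Give every bridge the colour of the $0$-paths (colour~$2$ in your convention). Use colour~$1$ on at most one bridge $e^*$ of $Q_p$: none if $n_p=2$, either edge if $n_p=3$, the middle edge if $n_p=4$. This choice guarantees that $e^*$ belongs to every pair of adjacent bridges. Now take $x,y$ in different components, with route $f_1,\ldots,f_r$ in $T$.

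If $e^*$ is one of the $f_j$, take $0$-paths in all components; the resulting path has exactly one edge of colour~$1$.

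Otherwise, no two consecutive $f_j$ share a vertex, since they would be adjacent edges of $Q_p$ and hence one of them would be $e^*$. So either the route is the single edge $xy$, or some segment of it joins two distinct vertices of one component. A $1$-path on that segment together with $0$-paths elsewhere has exactly one edge of colour~$1$.

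Pairs inside one component use a $1$-path.

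This also shows where the hypothesis really enters: a single bridge must meet every pair of adjacent bridges. That is impossible for $P_5$ and for $2P_3$; compare the sharpness example after Theorem~\ref{main3}. The issue is not trivial components as such, since an internal vertex of $Q_p$ allows no detour even when its component is nontrivial.

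A smaller point concerns step~1. The property you ask for is true, but it does not follow from Lemma~\ref{lem_cfc(G)=2-2-connected} or Corollary~\ref{cor_cfc(G)=2-2-edge-connected}, which only assert $cfc=2$. Here is a direct argument.

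Each nontrivial component of $G-B$ is bridgeless, so its blocks are $2$-connected. Colour one edge $e_D$ in each block $D$ with colour~$1$ and every other edge with colour~$2$.

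Since $D-e_D$ is connected, $0$-paths exist. Subdividing $e_D$ and applying the fan lemma gives, for any two distinct vertices of $D$, a path between them through $e_D$. Concatenating along the block--cutvertex tree, using $e_D$ in one block and avoiding the colour-$1$ edge in the others, yields $1$-paths.

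This construction handles complete components as well, so your separate fallback for them is not needed.
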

 
Without requiring that $C(G)$ is a linear forest, then an upper bound for the conflict-free connection number of an arbitrary connected graph has been shown by Ji et al. \cite{Ji-preprint}.
\begin{theorem}[Ji et al. \cite{Ji-preprint}]
\label{upper_bound_cfc(G)_C(G)}
If $G$ is a connected, non-complete graph with $C(G)$, then $cfc(G)\leq \max\lbrace 2, \vert C(G)\vert\rbrace$.
\end{theorem}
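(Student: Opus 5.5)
The approach is induction on $t=\vert C(G)\vert$. The small cases $t\le 2$ come from results already quoted in this section. For $t\ge 3$, we peel off one leaf of the bridge tree and give the peeled cut-edge a brand-new colour.

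\textbf{Base cases.} If $t=0$, then $G$ is a non-complete $2$-edge-connected graph, and Corollary \ref{cor_cfc(G)=2-2-edge-connected} gives $cfc(G)=2$. If $t=1$, then $C(G)\cong K_2$. If $t=2$, then $C(G)$ is either $2K_2$ or $P_3$. In all three of these cases $C(G)$ is a linear forest with $2=n_1=\cdots=n_{p-1}\le n_p\le 4$. Hence Theorem \ref{cfc=2-chang2018} applies and gives $cfc(G)=2=\max\{2,t\}$.

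\textbf{Inductive step, $t\ge 3$.} Contract every component of $G-E(C(G))$ to a single vertex; the result is the bridge tree, which is a tree because every removed edge is a cut-edge. Choose a leaf $L$ of this tree. Then $L$ is a $2$-edge-connected subgraph of $G$ (possibly a single vertex), joined to the rest of $G$ by exactly one cut-edge $e=xy$ with $y\in V(L)$. Put $G'=G-V(L)$.
\begin{enumerate}
\item Every cut-edge of $G'$ is a cut-edge of $G$ and conversely, apart from $e$, so $G'$ is connected with $\vert C(G')\vert=t-1\ge 2$. In particular $G'$ is not complete, since a complete graph on at least three vertices has no cut-edges.
\item By induction, $G'$ has a conflict-free connected colouring with colours from $\{1,\dots,t-1\}$, where $t-1=\max\{2,t-1\}$.
\item Colour $L$ internally. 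If $L$ is a single vertex or a complete graph, colour all its edges with colour $1$; any two of its vertices are then joined by a single edge, which is trivially conflict-free. Otherwise, take the $2$-colouring from $\{1,2\}$ given by Corollary \ref{cor_cfc(G)=2-2-edge-connected}.
\item Give $e$ the new colour $t$.
\end{enumerate}

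\textbf{Verification.} Pairs of vertices inside $G'$ or inside $L$ are handled by the colourings of $G'$ and $L$, because the conflict-free paths provided there stay inside these subgraphs. For $u\in V(L)$ and $v\in V(G')$, take any path in $L$ from $u$ to $y$, then the edge $e$, then any path in $G'$ from $x$ to $v$. This is a path, because $L$ and $G'$ are vertex-disjoint. Colour $t$ occurs on it exactly once, since it is used nowhere else. The total number of colours is $t=\max\{2,t\}$.

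\textbf{Main obstacle.} I expect the delicate point to be the low cases $t=1,2$. Here no fresh colour is available: a path crossing the bridge must reuse colours already used inside the $2$-edge-connected pieces, so one must control how the internal colourings interact with the bridges. My plan avoids doing this by hand by invoking Theorem \ref{cfc=2-chang2018}. The one check needed is that every graph with at most two cut-edges has $C(G)$ among $K_2$, $2K_2$ and $P_3$, so that the hypothesis $n_p\le 4$ of that theorem holds. Once this base is secured, the step $t\ge 3$ is routine, because the new colour $t$ automatically makes every crossing path conflict-free.
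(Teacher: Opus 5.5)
Your proof is correct. The paper gives no proof to compare against: it quotes this statement from Ji et al.\ and uses it as a black box in the proofs of Theorem~\ref{main0} and Theorem~\ref{main1}.

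What your argument shows is that, inside the paper, the statement already follows from the two results of Chang et al.\ that it quotes, plus an elementary peeling step.

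\begin{itemize}
\item \textbf{Base cases.} For $|C(G)|\le 2$ you correctly reduce to Corollary~\ref{cor_cfc(G)=2-2-edge-connected} and Theorem~\ref{cfc=2-chang2018}. The only possible shapes are $K_2$, $2K_2$ and $P_3$, whose components have order at most $3$. So even a weaker form of Theorem~\ref{cfc=2-chang2018}, with $n_p\le 3$, would suffice.
\item \textbf{Inductive step.} Unwinding the induction, the peeled bridges $e_t,e_{t-1},\dots,e_3$ each get a private colour $t,\dots,3$. Any path crossing one of them is therefore automatically conflict-free, and only colours $1,2$ are shared among the $2$-edge-connected pieces.
\item \textbf{Why the fresh colour matters.} Theorem~\ref{cfc=cut-edges} combined with the trivial bound $cfc(T)\le|E(T)|$ for trees would only give $cfc(G)\le|C(G)|+1$. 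Spending a new colour on each peeled bridge is exactly what removes the $+1$.
\end{itemize}

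One sentence deserves expansion: the claim that the cut-edges of $G'$ are exactly those of $G$ other than $e$. It holds for two reasons. First, $e$ is the only edge between $V(L)$ and $V(G')$, so no cycle of $G$ meets both sides; an edge of $G'$ therefore lies on a cycle of $G'$ if and only if it lies on a cycle of $G$. Second, $L$ is a component of $G-E(C(G))$, so it contains no cut-edges of $G$, and every cut-edge other than $e$ lies in $G'$.
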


Moreover, let  $\lbrace C_1, C_2,\ldots, C_m\rbrace$ be components of $C(G)$, where $m\geq 1.$ Set $h(G)=\max\{cfc(C_i) \mid i\in [m]\}.$ In \cite{Czap}, the authors proved that 
\begin{theorem}[Czap et al. \cite{Czap}]
	\label{cfc=cut-edges}
	If $G$ is a connected graph with cut-edges, then $h(G)\leq cfc(G)\leq h(G)+1$.
\end{theorem}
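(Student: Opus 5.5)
The plan is to prove the two inequalities separately. The lower bound is a structural fact about paths that use cut-edges. The upper bound needs an explicit colouring that adds one fresh colour to optimal colourings of the components $C_i$.

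\textbf{Lower bound $h(G)\le cfc(G)$.} Fix a conflict-free connecting colouring of $G$ and a component $C_i$ of $C(G)$; I will show that its restriction to $E(C_i)$ is a conflict-free connecting colouring of $C_i$. Since $C_i$ is a forest component, it is a tree. Take $u,v\in V(C_i)$, let $x_0=u,x_1,\ldots,x_r=v$ be the $u$--$v$ path in $C_i$, and put $e_j=x_{j-1}x_j$.

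\begin{itemize}
\item Each $e_j$ is a cut-edge separating $u$ from $v$, so every $u$--$v$ path $P$ in $G$ crosses each $e_j$. It crosses $e_j$ exactly once, because after crossing a bridge a path cannot return to the other side.
\item $P$ starts at $x_0$ and must leave the side of $u$ through $e_1$, whose endpoint on that side is $x_0$ itself. Hence the first edge of $P$ is $e_1$.
\item Inductively, once $P$ arrives at $x_j$ it must leave the region between $e_j$ and $e_{j+1}$ through $e_{j+1}$, whose endpoint there is $x_j$. A path cannot revisit $x_j$, so the next edge is $e_{j+1}$.
\end{itemize}

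Therefore every $u$--$v$ path in $G$ coincides with the tree path. The conflict-free $u$--$v$ path guaranteed in $G$ is then a path of $C_i$ with the same colours. This gives $cfc(C_i)\le cfc(G)$ for every $i$, and hence $h(G)\le cfc(G)$.

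\textbf{Upper bound $cfc(G)\le h(G)+1$.}
\begin{enumerate}
\item Colour each $C_i$ with an optimal conflict-free colouring from $\{1,\ldots,h\}$, where $h=h(G)$.
\item Colour the edges of $G-E(C(G))$ using the fresh colour $h+1$ together with one old colour. Each nontrivial component $D$ of $G-E(C(G))$ is $2$-edge-connected. By Corollary \ref{cor_cfc(G)=2-2-edge-connected} (or a single colour when $D$ is complete), $D$ has a conflict-free colouring with two colours, which I take to be $h+1$ and $1$.
\item For $u,v\in V(G)$, decompose a suitable $u$--$v$ path into alternating pieces: maximal runs along a single tree $C_i$, and traversals of bridgeless components $D$. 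Then exhibit a colour occurring exactly once. If $u,v$ lie in one $D$ or one $C_i$, the colourings from steps 1 and 2 finish directly.
\end{enumerate}

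\textbf{Main obstacle.} The hardest step is the general case of step 3. A path may pass through several trees $C_i$ and several components $D$, and the unique colours from different pieces can cancel each other. My first attempt would be to refine the colouring of each $D$ so that any two of its vertices, in particular the attachment vertices of cut-edges, are joined by a path containing the fresh colour $h+1$ an even number of times or not at all. The goal is that $h+1$ never interferes, and the witness colour is produced by one chosen tree piece.

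If this fails, I would instead argue by induction on the number of components of $C(G)$. I would contract a bridgeless component that is a leaf of the block-cut-edge tree, apply the induction hypothesis, and then reattach it. The point to check is that the new colour can be chosen to appear exactly once along the one connecting cut-edge path that the reattachment creates.
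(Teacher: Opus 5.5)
The paper gives no proof of this statement (it is quoted from Czap et al.), so your proposal has to stand on its own. Your lower bound is correct. Each $e_j$ is a bridge separating $u$ from $v$, and the first vertex of a path has one incident path edge (interior vertices have two). So every $u$--$v$ path of $G$ is the path of $C_i$, which gives $cfc(C_i)\le cfc(G)$.

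The upper bound, however, is not proved. Step~3 is the only place where conflict-freeness must be checked for paths through several trees $C_i$ and several $2$-edge-connected pieces $D$, and you leave it as an open ``main obstacle''. Moreover, your preferred strategy cannot succeed: keeping colour $h+1$ out of the way (even or zero occurrences) and taking the witness from a tree piece. Let $D$ be a cycle with pendant edges $xa$ and $by$ at distinct vertices $a\neq b$. Then:
\begin{itemize}
\item $C(G)$ consists of two disjoint edges, so $h(G)=1$ and step~1 gives both cut-edges colour $1$.
\item Every $x$--$y$ path uses both cut-edges, so colour $1$ is never unique.
\item Hence the witness must be colour $2=h+1$, occurring exactly once on the $a$--$b$ part, which is the opposite of your plan.
\end{itemize}
An arbitrary $2$-colouring of $D$ from Corollary~\ref{cor_cfc(G)=2-2-edge-connected} gives no control over how often $h+1$ appears between attachment vertices. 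Your inductive fallback has the same defect: the hypothesis does not tell you which colour is the witness on a path ending at the contracted vertex, so you cannot safely extend that path into the reattached component.

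The missing idea is to make the fresh colour the witness, appearing exactly once. In each nontrivial component $D$ of $G-E(C(G))$, fix a spanning tree $T_D$. Colour $E(T_D)$ with colour $1$ and $E(D)\setminus E(T_D)$ with colour $h+1$.
\begin{itemize}
\item For $a\neq b$ in $D$, the $a$--$b$ path of $T_D$ avoids colour $h+1$.
\item Let $e$ be an edge of that path, and let $X\ni a$, $Y\ni b$ be the components of $T_D-e$. Since $e$ is not a bridge of $D$, there is an edge $xy\in E(D)\setminus E(T_D)$ with $x\in X$, $y\in Y$.
\item The $a$--$x$ path of $T_D$, the edge $xy$, and the $y$--$b$ path of $T_D$ then form an $a$--$b$ path with exactly one edge of colour $h+1$.
\end{itemize}
Any $u$--$v$ path of $G$ meets each $D$ in a single segment, since it cannot re-enter $D$ after crossing a bridge, and its cut-edges are forced. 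If it uses no non-cut edge, it is the path of some $C_i$ and is conflict-free by step~1. Otherwise, reroute one $D$-segment through the path with exactly one $(h+1)$-edge and every other $D$-segment along $T_D$. Then colour $h+1$ occurs exactly once.
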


We are now able to prove our results. Recall Theorem \ref{main0} here.
\begin{theorem-non}{Theorem \ref{main0}} 
Let $G$ be a connected graph of order $n$ and let $k$ be an integer. If
\[
\sigma_{4}(G) \ge
\begin{cases} 
	2n-7 &\text{for } k=3 \text{ and } n\ge 8 \\ 
	n &\text{for } k =4 \text{ and } n\ge9\\
	n-1 &\text{for } k=5 \text{ and } n \ge 11 \\
	\displaystyle\frac{n+5}{2} &\text{for } k=6 \text{ and } n \ge 17 \\
	\displaystyle\frac{4n-4k-7}{k+2} & \text{for } k \ge 7 \text{ and }\begin{cases} 
		n \ge 286 &\text{for } k=7 \\
		n \ge 207 &\text{for } k=8 \\
		n \ge 190 &\text{for } k=9 \\
		\displaystyle n > k^2 + 7k + 18 + \frac{33}{k-2}  &\text{for } k\ge 10, \\
	\end{cases}
\end{cases}
\]
then $cfc(G)\leq k.$
\end{theorem-non}
\begin{proof}
	\upshape
	Since the hypotheses are satisfied, $G$ has at most $k$ cut-edges by Theorem \ref{thm0.1}. Thanks to Theorem \ref{upper_bound_cfc(G)_C(G)} we conclude that $cfc(G)\leq k.$
	\end{proof}

\noindent{\bf Sharpness.}\, We only consider sharpness for $k \ge 7.$ Given any two integers $k\ge7$ and $m\ge k+1,$ let $M_{i}$ be $k+2$ copies of the complete graph $K_{m}$ and fix $v\in V(M_1).$ We construct a connected graph $M$ by adding edges connecting the vertex $v$ to exactly one vertex $u_{i}\in V(M_{i})$ where $i\in[k+2]\setminus\{1\}$ (see Figure 6). It is easy to see that the order of $M$ is $n=(k+2)m$ and $\sigma_{4}(M)=4(m-1)=\frac{4n-4k-8}{k+2} < \frac{4n-4k-7}{k+2}$. However, we obtain $cfc(G)\geq k+1$. Therefore, the results of Theorem \ref{main0} are optimal for every $k\geq 7.$

\begin{center}
	\begin{tikzpicture}[
		scale=0.9,
		vertex/.style={circle, draw, fill=black, inner sep=1pt, minimum size=3pt},
		clique/.style={circle, draw, thick, minimum size=2cm},
		satellite/.style={circle, draw, thick, minimum size=1.3cm},
		edge/.style={thick},
		cutedge/.style={thick} 
		]
		\node[clique] (M1) at (0,0) {};
		\node at (0,0) {$M_1 \cong K_m$};
		\begin{scope}[shift={(0, 4)}]
			\node[satellite] (M2) at (0,0) {};
			\node at (0, 0) {$M_2$};
			\node[vertex] (u2) at (M2.south) {};
		\end{scope}
		\begin{scope}[shift={(160:4)}]
			\node[satellite] (M3) at (0,0) {};
			\node at (0, 0) {$M_3$};
			\node[vertex] (u3) at (M3.east) {}; 
		\end{scope}
		\begin{scope}[shift={(20:4)}]
			\node[satellite] (Mk) at (0,0) {};
			\node at (0, 0) {$M_{k+2}$};
			\node[vertex] (uk) at (Mk.west) {}; 
		\end{scope}
		\node[vertex] (v2) at (M1.north) {};
		\draw[cutedge] (v2) -- (u2) node[midway, right, black] {};
		\node[vertex] (v3) at (M1.north) {};
		\draw[cutedge] (v3) -- (u3);
		\node[vertex] (vk) at (M1.north) {};
		\draw[cutedge] (vk) -- (uk);
		\node[below=1.5cm] at (0, 0) {\textbf{Figure 6.} Graph $M$ for $k\ge 7$.};    
	\end{tikzpicture}
\label{F5}
\end{center}

Recall Theorem \ref{main1} here.
\begin{theorem-non}{Theorem \ref{main1}}\label{cfc(G)=2-sigma_4(G)}
Let $n\geq 8$ be an integer and $G$ be a connected, non-complete graph of order $n$. If $\sigma_4(G)\geq 2n-7$, then one of the following holds:
\begin{enumerate}
	\item $C(G)\cong K_{1,3}$ implying $cfc(G)=3$.
	\item $C(G)$ is a linear forest implying $cfc(G)=2$.
\end{enumerate}
\end{theorem-non}
\begin{proof}
Since $\sigma_4(G)\geq 2n-7$, by Theorem \ref{thm0.1}, $G$ has at most three cut-edges. 

For 1). By Theorem \ref{upper_bound_cfc(G)_C(G)}, $cfc(G)\leq 3$. Since $C(G)\cong K_{1,3}$, it can be readily seen that $cfc(G)\geq 3$. Hence, $cfc(G)=3$.

For 2). Since $G$ is a connected, non-complete graph and $C(G)$ is a linear forest with at most three cut-edges, applying Theorem \ref{cfc=2-chang2018}, $cfc(G)=2$. 

Our proof is obtained. 	
\end{proof}
\noindent\textbf{Sharpness.} Graph $H_1$ in Example 1 is a connected, non-complete graph with $\sigma_4(H_1)=2n-8$ and $C(H_1)$ is a linear forest. Clearly, by Theorem \ref{cfc=2-chang2018}, $cfc(H_1)\geq 3$. 

Next, recall Theorem \ref{main2} here. 
\begin{theorem-non}{Theorem \ref{main2}}\label{cfc=2-sigma_4(G)-min}
Let $n\geq9$ be an integer and $G$ be a connected, non-complete graph of order $n$ and $\delta(G)\geq2$. If $\sigma_4(G)\geq n$, then $cfc(G)=2$.
\end{theorem-non}
\begin{proof}
    By Theorem \ref{thm0.1}, $G$ has at most $4$ cut-edges. Let $G_1,\ldots,G_k$ be $k$ components in $G - C(G)$, where $k\leq5$. Let $B=\lbrace G_i\in G - C(G)\text{ where there exists } v_i\in V(G_i) \text{ such that } N(v_i)\subseteq V(G_i)\rbrace$. Since $\delta(G)\geq 2$, it follows that $\vert B\vert \geq1$. 

    Clearly, $\vert B\vert\leq 3$. Otherwise, let $v_j\in V(G_j)$ such that $N(v_j)\subseteq V(G_j)$, where $j\in[4]$. Now, 
    $$n\geq \sum_{j=1}^4\vert V(G_j)\vert\geq\sum_{j=1}^4 (d_G(v_j)+1)\geq\sigma_4(G)+4\geq n+4,$$
    a contradiction. Hence, $1\leq \vert B\vert\leq3$. Moreover, $C(G)\notin \lbrace K_{1,4}, K_{1,3}\cup P_2\rbrace$.

    Now, suppose that $cfc(G)\geq3$. By Lemma \ref{lem_cfc(G)=2-2-connected}, Corollary \ref{cor_cfc(G)=2-2-edge-connected} and Theorem \ref{cfc=2-chang2018}, $C(G)\in\lbrace{2P_3}, K_{1,3}\rbrace$. Since $\delta(G)\geq 2$, $\vert B\vert =3$. Let $B=\lbrace G_1, G_2, G_3\rbrace$ and $v_j\in V(G_j)$  such that $N(v_j)\subseteq V(G_j)$, where $j\in[3]$. We consider two following cases. 
    
    \noindent\textbf{Case 1.} $C(G)\cong 2P_3$. Let $u_1u_2u_3$ and $u_4u_5u_6$ are two paths $P_2$ in $C(G)$, where $u_1\in V(G_1)$, $u_3,u_4\in V(G_2)$ and $u_6\in V(G_3)$. It can be readily seen that $\lbrace v_1,v_2,v_3,u_2\rbrace$ is an independent set in $G$. Hence, 
    $$
    n=\sum_{j=1}^3\vert V(G_j)\vert + 2\geq\sum_{j=1}^3(d_G(v_j)+1)+d_G(u_2)\geq \sigma_4(G)+3\geq n+3,
    $$
    a contradiction. 

   \noindent\textbf{Case 2.} $C(G)\cong K_{1,3}$. Let $K_{1,3}=uu_1u_2u_3$, where $u$ is the center vertex of $K_{1,3}$ and $v_i\in V(G_i)$ for each $i \in [3]$. Hence, $\lbrace u,v_1,v_2,v_3\rbrace$ is an independent set in $G$. Now,
   $$
     n=\sum_{j=1}^3\vert V(G_j)\vert + 1\geq\sum_{j=1}^3(d_G(v_j)+1)+d_G(u)-2\geq \sigma_4(G)+1\geq n+1,
   $$
   a contradiction. 

   Therefore, $cfc(G)=2$. We finish our proof. 
\end{proof}

Next, recall Theorem \ref{main3} here.
\begin{theorem-non}{Theorem \ref{main3}}\label{cfc(G)=2-sigma_4(G)-natural-ext}
Let $n\geq 21$ be an integer and $G$ be a connected, non-complete graph of order $n$ and $\delta(G)\geq 3$. If $C(G)$ is a linear forest and $\sigma_4(G)\geq\frac{4n-19}{5}$, then $cfc(G)=2$.
\end{theorem-non}

\begin{proof} 
For $n\geq21$ we obtain $\frac{4n-19}{5}\geq\frac{n+5}{2}$. Hence $\sigma_4(G)\geq\frac{n+5}{2}$. By Theorem \ref{thm0.1}, $G$ has at most $6$ cut-edges. 

Suppose that $4\leq\vert C(G)\vert\leq6$, then $G - C(G)$ has exact $k$ components, say $G_1,\ldots,G_k$, where $5\leq k\leq 7$. Let $n_j$ be the order of $G_j$, where $j\in[k]$. Since $\delta(G)\geq 3$ and $C(G)$ is a linear forest, for every index $j\in[5]$, there always exits a vertex $v_j\in V(G_j)$ such that $N(v_j)\subseteq V(G_j)$. Following Case 1 in the proof of Theorem \ref{thm0.1}, we get
$$
4n=4\sum_{j=1}^{k}\vert V(G_j)\vert \geq 4\sum_{j=1}^{k} (d_G(v_j)+1)\geq k\sigma_4(G)+4k\geq k\left(\frac{4n-19}{5}\right)+4k. 
$$
Note that $5\leq k\leq7$. After some simple manipulations, we get a contradiction. Now, $G$ has at most $3$ cut-edges. Moreover, $C(G)$ is linear forest. By Theorem \ref{cfc=2-chang2018}, $cfc(G)=2$.

We obtain our proof.
\end{proof}
\noindent\textbf{Sharpness.} Let $n = 5p$, for $p \ge 4$. Consider a graph $G$ consisting of five disjoint blocks $G_1, G_2, G_3, G_4, G_5$, where each block isomorphics to a complete graph $K_p$. Choose the vertices $v_1 \in V(G_1)$, $v_2 \in V(G_2)$, $v_4 \in V(G_4)$, $v_5 \in V(G_5)$, and choose two vertices $u_3, v_3 \in V(G_3)$. The graph $G$ is connected by adding four following cut-edges $v_1v_2$, $v_2u_3$, $v_3v_4$, and $v_4v_5$. Note that $u_3$ is not necessary different from $v_3$. In fact, if $u_3$ and $v_3$ coincide, then $C(G) \cong P_5$. Otherwise, $C(G) \cong 2P_3$. We obtain $\sigma_4(G) = 4(p-1) = 4(\frac{n}{5}-1) = \frac{4n-20}{5}$. However, it can be witnessed that $cfc(G) \ge 3$. Therefore, the condition $\sigma_4(G)\geq\frac{4n-19}{5}$ in Theorem \ref{main3} is tight.
\begin{center}
    \begin{tikzpicture}[
    every node/.style={font=\normalsize},
    vclique/.style={ellipse, draw=black, thick, minimum width=1.5cm, minimum height=3cm, text=black},
    hclique/.style={ellipse, draw=black, thick, minimum width=3cm, minimum height=1.5cm, text=black},
    cutnode/.style={circle, draw=black, thick, fill=black, inner sep=1.2pt}
]
    \coordinate (V1) at (0, 0);
    \coordinate (V2) at (2.5, 0);
    \coordinate (V3) at (4, 0);
    \coordinate (V4) at (7, 0); 
    \coordinate (V5) at (8.5, 0);
    \coordinate (V6) at (11, 0);

    \draw[thick] (V1) -- (V2);
    \draw[thick] (V2) -- (V3);
    \draw[thick] (V4) -- (V5);
    \draw[thick] (V5) -- (V6);
    \node[vclique, anchor=north] (G1) at (V1) {$K_p$};
    \node[vclique, anchor=north] (G2) at (V2) {$K_p$};
    \node[hclique] (G3) at ($(V3)!0.5!(V4)$) {$K_p$};
    \node[vclique, anchor=north] (G4) at (V5) {$K_p$};
    \node[vclique, anchor=north] (G5) at (V6) {$K_p$};
    \node[cutnode, label=$v_1$] at (V1) {};
    \node[cutnode, label=$v_2$] at (V2) {};
    \node[cutnode, label=135:$u_3$] at (V3) {};
    \node[cutnode, label=45:$v_3$] at (V4) {};
    \node[cutnode, label=$v_4$] at (V5) {};
    \node[cutnode, label=$v_5$] at (V6) {};
\end{tikzpicture}
\end{center}
 \section{Discussion}
 Let $n, l$ and $k$ represent arbitrary positive integers. We use $g(k)$ to denote a function of $k$, and $f(n, k)$ to denote a function of both $n$ and $k$. In \cite{Doan2024}, the authors proposed the following problem.
 
 \noindent{\bf Problem 1.}
 	{\it
 	Let $k\geq k_0$ and $n\geq g(k)$ be two integers. If $G$ is a connected graph of order $n$ and $\sigma_l(G)\geq f(n,k)$, then $G$ has at most $k$ cut-edges.
 }

 As we mentioned earlier, we believe that our technique in this paper can resolve this one. Thus, we hope the subsequent problem can be addressed.

\noindent{\bf Problem 2.} {\it Let $G$ be a connected graph with order $n$ and let $k$ be an integer ($k\geq 5$). How can we find a function $f(n,k)$ such that if $\sigma_{l}(G)\geq f(n,k),$ then $cfc(G)\leq k$ for every  $n\geq g(k).$ 
}

 On the other hand, Ha et al. \cite{HHM} have recently studied the conflict-free connection number for planar graphs. Hence, we propose a question for the final discussion. 
 
\noindent{\bf Problem 3.} {\it Can we tell some similar results about the planar graphs?
}

\end{document}